    \theoremstyle{definition}\newtheorem{rema}{Remark}[section]
    \theoremstyle{plain}\newtheorem{propo}[rema]{Proposition}
    \newtheorem{theo}[rema]{Theorem}
    \newtheorem{proposition}[rema]{Proposition}
    \theoremstyle{definition}
\newcommand{\ignore}[1]{}
\numberwithin{equation}{section}
\newcommand{\gga}{G\"ollnitz-Gordon-Andrews }
\begin{document}
\title{A motivated proof of the G\"{o}llnitz-Gordon-Andrews 
identities}
\author{Bud Coulson, Shashank Kanade, James Lepowsky, Robert McRae, 
\\ Fei Qi, Matthew C. Russell and Christopher Sadowski}
\date{}
\bibliographystyle{alpha}
\maketitle

\begin{abstract}
We present what we call a ``motivated proof'' of the \gga identities.
A similar motivated proof of the Rogers-Ramanujan
identities was previously given by G.\ E.\ Andrews and R.\ J.\ Baxter, 
and was subsequently generalized to Gordon's identities
by J.\ Lepowsky and M.\ Zhu. We anticipate that the present proof
of the \gga identities will illuminate certain twisted 
vertex-algebraic constructions.
\end{abstract}

\section{Introduction}

The \gga identities form a family of partition identities 
somewhat analogous to the 
Gordon-Andrews generalizations of the Rogers-Ramanujan identities. 
As presented in Chapter 7 of \cite{A3} (which also gives
the Rogers-Ramanujan and Gordon-Andrews identities), 
these identities state that 
for any $k\ge 2$ (for $k=1$, one gets the trivial identity $1=1$) 
and $i=1,\dots,k$,
$$\prod_{\substack{m\geq 1,\,\,m\,\not\equiv\, 2\,(\mathrm{mod}\,4), \\ 
m\,\not\equiv \, 0,\: 2k\pm(2i-1)\,(\mathrm{mod}\,4k)}} \frac{1}{1-q^m} 
= \sum\limits_{n\ge 0} d_{k,i}(n)q^n, $$
where 
$d_{k,i}(n)$ is the number of partitions $(b_1,\dots,b_s)$ of $n$ (with $b_p\geq 
b_{p+1}$), satisfying 
the following conditions:
\begin{enumerate}
\item No odd parts are repeated,
\item $b_p-b_{p+k-1}\geq 2$ if $b_p$ is odd,
\item $b_p-b_{p+k-1} >2$ if $b_p$ is even, and
\item at most $k-i$ parts are equal to $1$ or $2$.
\end{enumerate}
Here we have replaced $i$ by $k-i$ in the statement of these identities
in Theorem 7.11 of \cite{A3}.  Also, here and below, $q$ is a formal
variable.

For $k=2$, these identities were independently discovered by H.\ G\"ollnitz 
\cite{Gol} 
and B.\ Gordon \cite{G2},
and were subsequently generalized to all $k$ by G.\ E.\ Andrews \cite{A2}. 
As noted in \cite{SW}, L. J. Slater had published in \cite{S} analytic 
counterparts of these identities 
even earlier than \cite{Gol} and \cite{G2}, and 
it has recently been pointed out by A. Sills that two analytic identities 
equivalent to the analytic G\"{o}llnitz-Gordon identities were 
actually recorded by Ramanujan in his ``Lost Notebook'' long 
before Slater rediscovered them (cf. \cite{ABe}, page 37).

In this paper, we shall present what we call a ``motivated proof'' of
these identities. 
(This designation has become a technical term in our work,
and we shall omit the quotation marks.)
To set the stage, we begin by explaining the
motivated proof of the classical Rogers-Ramanujan identities
carried out by Andrews and R. J. Baxter in \cite{AB}.  Recall the
Rogers-Ramanujan identities:
\begin{eqnarray*}
\prod_{m \geq 1, \,\, m \not\equiv 0, \, \pm 2 \, (\text{mod } 5)}
\frac{1}{1-q^m}
& = &
\sum_{n \geq 0} p_1 (n)q^n
\end{eqnarray*}
and
\begin{eqnarray*}
\prod_{m \geq 1, \,\, m \not\equiv 0, \, \pm 1 \, (\text{mod } 5)}
\frac{1}{1-q^m}
& = &
\sum_{n \geq 0} p_2 (n)q^n,
\end{eqnarray*}
where
\begin{eqnarray*}
p_1 (n)
& = & 
\text{the number of partitions of $n$ for which adjacent parts have}
\nonumber \\
& &
\quad \text{difference at least 2}
\end{eqnarray*}
and
\begin{eqnarray*}
p_2 (n)
& = & 
\text{the number of partitions of $n$ for which adjacent parts have}
\nonumber \\
& & 
\quad \text{difference at least 2 and in which 1 does not appear.}
\end{eqnarray*}
Note that the first product is the generating function for the number of partitions
with parts congruent to $\pm 1\pmod 5$, 
and the second product gives the number of partitions with parts congruent to $\pm 2\pmod 5.$

Upon examining the two sum sides 
and assuming the truth of the Rogers-Ramanujan identities, one 
sees that if one subtracts the second Rogers-Ramanujan product, called $G_2(q)$ in \cite{AB},
from the first, called $G_1(q)$, then one gets a power series with nonnegative coefficients.
An explanation of this phenomenon using only the product sides
was asked for by Leon Ehrenpreis. While answering this 
question, 
Andrews and Baxter
were naturally led to a proof of the Rogers-Ramanujan identities themselves. 
Their idea was to form a sequence of power series as follows: 
Let $G_3(q) = (G_1(q) - G_2(q))/q$, $G_4(q) = (G_2(q) - G_3(q))/q^2$, and more 
generally,
$G_i(q) = (G_{i-2}(q)-G_{i-1}(q))/q^{i-2}$. 
The key point was that one then observes {\em empirically} that 
for all $i$, $G_i(q)$ is a formal power series with constant term  $1$ and 
that $G_i(q)-1$
is divisible by $q^i$. This ``Empirical Hypothesis'' was then proved using only 
the product sides
of the Rogers-Ramanujan identities, and the 
Empirical Hypothesis 
immediately led to a proof of the identities themselves. 
This proof was in fact closely related to Baxter's proof in \cite{B} 
and also to Rogers's and Ramanujan's proof in \cite{RR}.
The truth of the Empirical Hypothesis
also gave a (motivated) answer to Ehrenpreis's question as well, by means
of a variant of the same argument.

In \cite{LZ}, the Andrews-Baxter motivated proof of the Rogers-Ramanujan 
identities
was generalized to the setting of Gordon's family of identities
\cite{G1} (see also Theorem 7.5 of \cite{A3}),
and in the present paper, we provide a motivated proof of the \gga identities 
analogous
to the motivated proof in \cite{LZ}. A certain ``shelf'' picture, implicit in 
\cite{AB},
became transparent in \cite{LZ}. Moreover, in \cite{LZ} the appropriate
Empirical Hypothesis was not actually observed empirically, but was instead
proved directly from the product sides and this in turn was used to prove 
Gordon's identities. 
This will be the case for the present paper as well.
Therefore, as in \cite{LZ}, we shall use the term ``Empirical Hypothesis'' as a 
technical term.
As we discuss below, this Empirical Hypothesis approach to discovering and 
proving
new identities is expected to be valuable when the ``sum sides'' are not known.

The ``shelf'' picture, as clarified in \cite{LZ}, amounts to the following:
Imagine that the given product sides form a ``$0^{\text{th}}$ shelf'' of 
formal power series in $q$.
Now one constructs a ``$1^{\text{st}}$ shelf'' by taking certain judiciously chosen 
linear 
combinations,
over the field of fractions of the ring of polynomials in the formal variable 
$q$,
of the power series on the $0^{\text{th}}$ shelf. 
Using analogous linear combinations, one repeats this process recursively,
to  successively build higher shelves of formal power series. 
At every step, one ensures that the power series appearing on the 
$j^{\text{th}}$ shelf
are of the form $1+q^{j+1}f(q)$, where $f(q)\in\mathbb{Z}[[q]]$. 
This is essentially the Empirical Hypothesis of Andrews-Baxter and of Lepowsky-Zhu. 

For the present context of the \gga identities, and actually for Gordon's 
identities as well, 
one way of discovering the linear combinations is to 
use linear combinations predicted by the 
$(a,x,q)$-recursions in Lemma 7.2 of \cite{A3}, with $a$ suitably specialized
as in \cite{A3}, but now with $x$ specialized to successively higher powers of $q$,
to construct the higher shelves 
(although this was not the method used in \cite{LZ} for discovering the appropriate
linear combinations). We elaborate on this process in Appendices A and C below. 
This issue  was already pointed out in the context of the Rogers-Ramanujan 
identities in \cite{AB},
and it is one more reason why we use ``Empirical Hypothesis'' as a purely technical term.
For the \gga identities and Gordon's identities, these linear combinations
are ``easy,'' in the sense that each power series on the $j^{\text{th}}$ shelf 
is obtained by multiplying certain power series on the $(j-1)^{\text{st}}$ shelf 
by certain Laurent polynomials and then adding.
In fact, for the \gga identities, these linear combinations can be simplified 
further if one allows
recursion on the $j^{\text{th}}$ shelf as well.
In retrospect, therefore, it might not have been too difficult to 
guess the choice of these linear combinations 
using pure experimentation with the aim of producing a sound Empirical 
Hypothesis.

As is well known, partition identities of Rogers-Ramanujan type are intimately related
to the representation theory of vertex operator algebras, 
and it turns out that the philosophy
of ``motivated proofs'' fits extremely well with vertex-operator-theoretic 
investigations of such identities. 
As is explained in \cite{LZ}, one starts with the product
sides of such identities as ``given,'' as was the case in \cite{LM}, and then the problem
is to exhibit and explain the corresponding combinatorial sum sides
in such a way that the relevant numbers of partitions arise as the dimensions 
of vector spaces constructed from natural generalized vertex operators.
In early vertex operator theory, 
which was indeed motivated by this very problem,
the combinatorial sum sides were built using 
monomials in principally twisted $Z$-operators, 
as developed in \cite{LW2}-\cite{LW4} using \cite{LW1} as a starting 
point. 
In fact, the $Z$-algebraic structure developed in \cite{LW2}-\cite{LW4} was used 
to give
a vertex-operator-theoretic interpretation of a certain family of 
Gordon-Andrews-Bressoud 
identities, as well as a vertex-operator-theoretic proof of the Rogers-Ramanujan 
identities, in the context of the affine Lie algebra $A_1^{(1)}$. 
A related vertex-operator-theoretic proof of the Gordon-Andrews-Bressoud 
identities was given in \cite{MP}. 
Untwisted analogues of the $Z$-operators were developed and exploited in 
\cite{LP}.
See \cite{L} for a number of related developments.
All of these algebraic structures came to be understood in a general 
vertex-algebraic framework based on generalized vertex operator
algebras, modules, twisted modules and intertwining operators;
see for instance \cite{DL}, \cite{H}, 
\cite{CLM1}, \cite{CLM2}, \cite{Cal1}, \cite{Cal2}, 
\cite{CalLM1}--\cite{CalLM4} and \cite{Sa}.
However, in the setting of the principally twisted $Z$-algebras, a 
natural mechanism for
interpreting the formal variable $x$ (referred to above), which counts the 
number of parts, is yet to be found. 
Therefore, proofs involving only the formal variable $q$,
which is concerned with the integer being partitioned, are desirable. 
After such  proofs are found, one can conceive of
interpreting the steps in the proofs 
by means of twisted intertwining operators among modules for
generalized vertex operator algebras; this is related to an early idea
of J. Lepowsky and A. Milas.  The desire to better understand 
the connection between partition identities and vertex operator algebra theory 
is the main incentive
for seeking ``motivated proofs'' as in the present work.
See the Introduction in \cite{LZ} for further discussion of these issues.

An important feature of such motivated proofs 
is that they do not require knowledge of
Andrews's  generalization of Rogers's (somewhat mysterious)
expression, formula (7.2.2) in \cite{A3}, which
after appropriate specializations of $a$ and $x$ and the 
use of the Jacobi triple product
identity gives the corresponding product sides of the relevant identities, 
and which on the other hand leads to recursions (Lemma 7.2 in \cite{A3})
whose solutions yield the sum sides, as we recalled above.
This important feature will be useful for the investigation of more 
complicated identities arising from the theory of
vertex operator algebras. 
In fact, a number of additional works on motivated proofs are underway.

It was in his work on statistical mechanics that Baxter independently
discovered the Rogers-Ramanujan identities \cite{B}.  There is a vast
literature on connections between Rogers-Ramanujan-type identities and
statistical mechanics, including \cite{ABF} as well as our motivating
paper \cite{AB} and work of A. Berkovich, B. McCoy, A. Schilling,
S. O. Warnaar and many others; we refer the reader to the references
in \cite{GOW} for a large number of relevant works, including
connections between Rogers-Ramanujan-type identities and a variety of
other fields.  In \cite{GOW}, M. Griffin, K. Ono and Warnaar have
given a framework extending the Rogers-Ramanujan identities,
incorporating the Hall-Littlewood polynomials, in order to deduce new
arithmetic properties of certain $q$-series.

This paper is organized as follows: 
In Section \ref{sec:shelf0} we recall the use of the Jacobi triple product 
identity to re-express the product sides of the \gga identities. 
The resulting formal power series form our $0^{\text{th}}$ shelf.
In Section \ref{sec:closed-form} we determine closed-form expressions 
for our higher shelves.
We use these closed-form expressions to formulate and prove our 
Empirical Hypothesis in Section \ref{sec:EH}. 
We recast the relevant recursions in an elegant
matrix formulation and prove one form of our main theorem in Section 
\ref{sec:matrixinterp}. 
We complete the motivated proof in Section \ref{sec:combinterp}.
In Appendix \ref{app:xqforGGA}, we compare our method with
the proof in \cite{A3}. 
Finally, the developments in the present work suggested to us
some natural enhancements of \cite{LZ}, which we proceed to give
in Appendices \ref{app:LZremarks} and \ref{app:xqforGRR}.

Just as in \cite{LZ}, throughout this paper we treat power series as 
purely formal series rather than as convergent series (in suitable domains) 
in complex variables.

This paper is an outcome of J. Lepowsky's Spring 2014 Rutgers graduate
course, ``The theory of partitions and vertex operator algebras.''

\section{The formal series $G_l(q)$}
\label{sec:shelf0}
Throughout this paper, we fix $k\geq 2$. The main tool we will use for the 
motivated proof is an infinite sequence $G_l(q)$, $l \ge 1$, of formal power 
series that we shall generate recursively from the product sides of the 
G\"{o}llnitz-Gordon-Andrews identities. We will want to arrange these formal 
series in ``shelves'' as follows: for each $j\geq 0$, the $j^{\text{th}}$ shelf 
will 
consist of the $k$ formal series 
\begin{equation*}
 G_{(k-1)j+i}(q)
\end{equation*}
for $1\leq i\leq k$. Note that the shelves will overlap, since
\begin{equation}\label{eq:edgematchingalgebra}
 (k-1)j+k=(k-1)(j+1)+1,
\end{equation}
that is, the first series in the $j+1^{\text{st}}$ shelf is defined to be the 
last 
series of the $j^{\text{th}}$ shelf.

We start with the case $j=0$ by defining the series $G_i(q)$ for $1\leq i\leq 
k$ 
as the product sides of the G\"{o}llnitz-Gordon-Andrews identities as presented 
in Theorem 7.11 of \cite{A3}: 
\begin{equation}\label{Gidef}
G_i(q)=\prod_{\substack{m\geq 1,\,\,m\,\not\equiv\, 2\,(\mathrm{mod}\,4), \\ 
m\,\not\equiv \, 0,\: 2k\pm(2i-1)\,(\mathrm{mod}\,4k)}} (1-q^m)^{-1}.
\end{equation}
From the sequence $G_i(q)$ of formal series, we shall recover
the G\"{o}llnitz-Gordon-Andrews identities. For $k=2$, $i=1,2$, these are the 
G\"{ollnitz}-Gordon identities, and the most general form of these identities, 
that is, for all $k\geq 2$, is due to Andrews.

\begin{rema}\label{prodsideinterp}
 Observe that for $1\leq i\leq k$, $G_i(q)$ is the generating function for 
partitions into parts not congruent to $2$ (mod $4$) and not congruent to $0, 
2k\pm(2i-1)$ (mod $4k$).
\end{rema}

Now for $j\geq 1$, we define the series $G_{(k-1)j+i}(q)$, $1\leq i\leq k$, 
recursively. As mentioned in (\ref{eq:edgematchingalgebra}),
\begin{equation*}
 G_{(k-1)j+1}(q)=G_{(k-1)(j-1)+k}(q)
\end{equation*}
tautologically, and for $2\leq i\leq k$, we define
\begin{equation}\label{recursion}
G_{(k-1)j+i}(q)=\dfrac{G_{(k-1)(j-1)+k-i+1}(q)-G_{(k-1)(j-1)+k-i+2}(q)}{q^{
2j(i-1)}}-q^{-1} G_{(k-1)j+i-1}(q).
\end{equation}
\begin{rema}
The recursions \eqref{recursion} can be predicted by Lemma 7.2 in \cite{A3}, as 
mentioned in the Introduction. Alternatively, they could be derived from 
knowledge of the sum sides of the G\"{o}llnitz-Gordon-Andrews identities;
see Sections \ref{sec:matrixinterp} and 
\ref{sec:combinterp} below.
\end{rema}

The product definition \eqref{Gidef} of the $0^{\text{th}}$-shelf series 
$G_i(q)$ for 
$1\leq i\leq k$ is not useful for studying the series in higher shelves, so we 
will need to rewrite the $G_i(q)$ using the Jacobi triple product identity, and 
work with the resulting alternating sums. The Jacobi triple product identity 
states (see for example Theorem 2.8 in \cite{A3}): 
\begin{equation*}
\sum_{n\in\mathbb{Z}} (-1)^n z^n q^{n^2}=\prod_{m\geq 0} (1-q^{2m+2})(1-z 
q^{2m+1})(1-z^{-1} q^{2m+1}).
\end{equation*}
We rewrite the sum side:
\begin{align*}
\sum_{n\in\mathbb{Z}} (-1)^n z^n q^{n^2}=\sum_{n\geq 0} (-1)^n (z^n 
q^{n^2}-z^{-n-1} q^{(n+1)^2})=\sum_{n\geq 0} (-1)^n z^n q^{n^2}(1-z^{-2n-1} 
q^{2n+1}).
\end{align*}
Specializing $q\mapsto q^{2k}$ and then $z\mapsto q^{2i-1}$, we have:
\begin{align}
& \prod_{m\geq 0} (1  -q^{4k(m+1)})(1-q^{2k(2m+1)+2i-1})(1-q^{2k(2m+1)-2i+1}) 
\nonumber \\
& \quad = \prod_{\substack{m\geq 1,\\ m\,\equiv\, 0,\: 
2k\pm(2i-1)\,(\mathrm{mod}\,4k)}}
\hspace{-0.5in}(1-q^m)\nonumber\\
& \quad = \sum_{n\geq 0} (-1)^n q^{2kn^2+(2i-1)n}(1-q^{(2n+1)(2k-2i+1)}) 
\nonumber \\
& \quad = \sum_{n\geq 0} (-1)^n 
q^{4k\binom{n}{2}+(2k+2i-1)n}(1-q^{(2k-2i+1)(2n+1)})\label{beforeF}.
\end{align}

We shall use the notation
\begin{equation} \label{eq:Fq}
F(q)=\prod_{m\,\not\equiv\, 2\,(\mathrm{mod}\, 
4)} (1-q^m)
\end{equation}
here and below.
Dividing the product and sum sides of \eqref{beforeF} by $F(q)$, we obtain
\begin{align}\label{eq:shelf0Gi}
 G_i(q) & =\dfrac{1}{F(q)}\sum_{n\geq 0} (-1)^n 
q^{4k\binom{n}{2}+(2k+2i-1)n}(1-q^{(2k-2i+1)(2n+1)}).
\end{align}
It is this form of the series $G_i(q)$ that we will always work with,
instead of their product expressions.

\section{Closed-form determination of the $G_l(q)$}\label{sec:closed-form}
\allowdisplaybreaks

This section contains the technical heart of the motivated proof, which is 
determining a closed-form expression for all the $G_l(q)$, $l \ge 1$, in terms 
of certain alternating sums. The following theorem is analogous to Theorem 2.1 
in \cite{LZ}, but the closed-form expressions are necessarily more complicated 
and the proof is more involved. As in \cite{LZ}, the proof is by induction on 
the shelf.
We have:
\begin{theo}\label{thm:closedform}
For any $j\geq 0$ and $1\leq i\leq k$,
\begin{align}\label{altsum}
G_{(k-1)j+i}(q)= & \,\dfrac{1}{F(q)} \sum_{n\geq 0}\dfrac{(-1)^n 
q^{4k\binom{n}{2}+(2k(j+1)+2i-1)n}(1-q^{2(n+1)})\cdots 
(1-q^{2(n+j)})}{(1+q^{2n+1})(1+q^{2(n+1)+1})\cdots 
(1+q^{2(n+j)+1})}\nonumber\\
& 
\hspace{4em}\cdot\left(1-q^{2(k-i+1)(2n+j+1)}+q^{2(n+j)+1}(1-q^{2(k-i)(2n+j+1)}
)\right).
\end{align}
\end{theo}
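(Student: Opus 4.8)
The plan is to prove \eqref{altsum} by induction on the shelf index $j$, exactly as advertised. The base case $j=0$ is immediate: when $j=0$ the products $(1-q^{2(n+1)})\cdots(1-q^{2(n+j)})$ and $(1+q^{2n+1})\cdots(1+q^{2(n+j)+1})$ are empty (equal to $1$), the exponent $4k\binom n2+(2k(j+1)+2i-1)n$ becomes $4k\binom n2+(2k+2i-1)n$, and the parenthetical factor collapses to $1-q^{2(k-i+1)(2n+1)}+q^{2n+1}(1-q^{2(k-i)(2n+1)})=1-q^{(2k-2i+1)(2n+1)}$ after combining the $q^{2n+1}$ terms; this is precisely \eqref{eq:shelf0Gi}. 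For the inductive step, I assume the formula holds for shelf $j-1$ (all $k$ series $G_{(k-1)(j-1)+i}(q)$, $1\le i\le k$) and derive it for shelf $j$. The series $G_{(k-1)j+1}(q)=G_{(k-1)(j-1)+k}(q)$ is handled by the tautological identification together with a direct check that plugging $i=k$ into the shelf-$(j-1)$ formula agrees with plugging $i=1$ into the shelf-$j$ formula — this uses the edge-matching arithmetic \eqref{eq:edgematchingalgebra} and is a routine reindexing. Then, for $2\le i\le k$, I invoke the recursion \eqref{recursion}, which expresses $G_{(k-1)j+i}(q)$ in terms of two shelf-$(j-1)$ series and the already-established shelf-$j$ series $G_{(k-1)j+i-1}(q)$, so I can induct on $i$ within the shelf as well.

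The computational core is therefore: substitute the shelf-$(j-1)$ closed forms for $G_{(k-1)(j-1)+k-i+1}(q)$ and $G_{(k-1)(j-1)+k-i+2}(q)$ into the numerator of the first term of \eqref{recursion}, substitute the (inductively known) shelf-$j$ closed form for $G_{(k-1)j+i-1}(q)$ into the second term, and verify that the combination equals the shelf-$j$ closed form for $G_{(k-1)j+i}(q)$. Everything carries the common factor $1/F(q)$, so I can work at the level of the alternating sums $\sum_{n\ge0}$. The difference $G_{(k-1)(j-1)+k-i+1}-G_{(k-1)(j-1)+k-i+2}$ of two shelf-$(j-1)$ sums — both with $j-1$ factors in numerator and denominator — must be massaged so that the $n$th summand acquires an extra numerator factor $(1-q^{2(n+j)})$ and an extra denominator factor $(1+q^{2(n+j)+1})$, turning $j-1$ products into $j$ products; the division by $q^{2j(i-1)}$ must exactly cancel against the shift in the $q$-exponent. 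Simultaneously, the $q^{-1}G_{(k-1)j+i-1}$ term (already a shelf-$j$ sum with the right number of factors) must combine additively with the leftover to produce the single parenthetical factor $1-q^{2(k-i+1)(2n+j+1)}+q^{2(n+j)+1}(1-q^{2(k-i)(2n+j+1)})$.

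The main obstacle I anticipate is the bookkeeping in that last combination: the difference of two shelf-$(j-1)$ sums naturally produces a telescoping or index-shifting maneuver (shifting $n\mapsto n-1$ or $n\mapsto n+1$ in one of the two sums) in order to match summands, and the alignment of the $q$-power exponents $4k\binom n2+(\cdots)n$ under such a shift against the $q^{2j(i-1)}$ denominator and the $q^{-1}$ prefactor is delicate — one wrong exponent and nothing cancels. Concretely, I expect to need an identity of the form: the shelf-$(j-1)$ parenthetical at parameter $k-i+1$, minus the one at parameter $k-i+2$ (after an appropriate reindexing), reassembles — together with the denominator adjustment — into $(1-q^{2(n+j)})$ times the shelf-$j$ parenthetical at parameter $k-i$, modulo the contribution absorbed by the $q^{-1}G_{(k-1)j+i-1}$ term. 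I would isolate this as a standalone algebraic lemma about the numerator polynomials, prove it by brute expansion (there are only four or so monomial terms on each side once the factor $(1-q^{2(k-i+1)(2n+j+1)})$-type expressions are multiplied out), and then feed it back. A useful sanity check at each stage is to specialize $i=k$ (where $k-i=0$ makes the second half of the parenthetical vanish) and to re-derive the $k=2$ G\"ollnitz-Gordon case by hand, matching the known shelf structure there. Once the numerator lemma is in place, the rest is formal manipulation of the sums over $n$, and the induction closes.
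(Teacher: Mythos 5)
Your plan is essentially the paper's own proof: a nested induction on the shelf index $j$ and then on $i$ within the shelf, with the base case read off from the Jacobi-triple-product form \eqref{eq:shelf0Gi}, the overlap of consecutive shelves handled by an explicit edge-matching identity, and the inductive step carried out by substituting the closed forms into \eqref{recursion} and performing exactly the split-and-shift manipulation (peeling off a factor $(1-q^{2n})$, shifting $n\mapsto n+1$ in one of the resulting sums, and recombining to create the extra numerator factor $(1-q^{2(n+j+1)})$ and denominator factor $(1+q^{2(n+j+1)+1})$) that you anticipate. Two small corrections to your bookkeeping: at $j=0$ the denominator product in \eqref{altsum} is not empty but equals $(1+q^{2n+1})$, and it is this factor --- not a ``combination of the $q^{2n+1}$ terms'' --- that cancels against the factorization of the parenthetical as $(1+q^{2n+1})\bigl(1-q^{(2k-2i+1)(2n+1)}\bigr)$ to recover \eqref{eq:shelf0Gi}; and the edge-matching check is not a routine reindexing but requires the same split-and-shift computation as the main inductive step, since passing from the shelf-$j$ formula at $i=k$ to the shelf-$(j+1)$ formula at $i=1$ forces the number of factors in numerator and denominator to increase by one.
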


\begin{proof}
The expression (\ref{altsum}) gives two different fomulas for the ``edge'' cases
--- $i=1$ for $j \ge 1$ and $i=k$ for $j-1$ ---
as shown in~\eqref{eq:edgematchingalgebra},
and we 
prove first that they are compatible.
That is, we show that for any $j\geq 1$, the two formulas given for 
$$
G_{(k-1)(j-1)+k}(q)=G_{(k-1)j+1}(q)
$$ agree. 
Using $\widetilde{G}_{j,i}(q)$ to denote the right-hand side of 
(\ref{altsum}), 
we show that 
\begin{align} \label{edge-matching}
\widetilde{G}_{j,k}(q)=\widetilde{G}_{j+1,1}(q)
\end{align} 
for any $j\geq 0$.
We will call this equality ``edge-matching.'' We have:
 \begin{align*}
 &\widetilde{G}_{j,k}(q) =\lefteqn{\dfrac{1}{F(q)}
 \sum_{n\geq 0}\dfrac{(-1)^n 
q^{4k\binom{n}{2}+(2k(j+2)-1)n}(1-q^{2(n+1)})\cdots 
(1-q^{2(n+j)})}{(1+q^{2n+1})(1+q^{2(n+1)+1})\cdots 
(1+q^{2(n+j)+1})}(1-q^{2(2n+j+1{})})}\nonumber\\
 & =  \dfrac{1}{F(q)}\sum_{n\geq 0}\dfrac{(-1)^n 
q^{4k\binom{n}{2}+(2k(j+2)+1)n}(1-q^{2(n+1)})\cdots 
(1-q^{2(n+j)})}{(1+q^{2n+1})(1+q^{2(n+1)+1})\cdots (1+q^{2(n+j)+1})} 
q^{-2n}(1-q^{2(2n+j+1{})})\nonumber\\
 & =  \dfrac{1}{F(q)}\sum_{n\geq 0}\dfrac{(-1)^n 
q^{4k\binom{n}{2}+(2k(j+2)+1)n}(1-q^{2(n+1)})\cdots 
(1-q^{2(n+j)})}{(1+q^{2n+1})(1+q^{2(n+1)+1})\cdots (1+q^{2(n+j)+1})}( 
q^{-2n}(1-q^{2n})+(1-q^{2(n+j+1{})})) \nonumber\\
 & =  \dfrac{1}{F(q)}\sum_{n\geq 1}\dfrac{(-1)^n 
q^{4k\binom{n}{2}+(2k(j+2)+1)n}(1-q^{2n})(1-q^{2(n+1)})\cdots 
(1-q^{2(n+j)})}{(1+q^{2n+1})(1+q^{2(n+1)+1})\cdots (1+q^{2(n+j)+1})} 
q^{-2n}\nonumber\\
 &  \hphantom{=}\,\, + \mbox{} \dfrac{1}{F(q)}\sum_{n\geq 0}\dfrac{(-1)^n 
q^{4k\binom{n}{2}+(2k(j+2)+1)n}(1-q^{2(n+1)})\cdots 
(1-q^{2(n+j)})(1-q^{2(n+j+1{})})}{(1+q^{2n+1})(1+q^{2(n+1)+1})\cdots 
(1+q^{2(n+j)+1})}\nonumber\\
 & =   -\dfrac{1}{F(q)}\sum_{n\geq 0}\dfrac{(-1)^n 
q^{4k\binom{n}{2}+4kn+(2k(j+2)+1)(n+1)}(1-q^{2(n+1)})(1-q^{2(n+2)})\cdots 
(1-q^{2(n+j+1{})})}{(1+q^{2(n+1)+1})(1+q^{2(n+2)+1})\cdots 
(1+q^{2(n+j+1{})+1})} 
q^{-2(n+1)}\nonumber\\
 & \hphantom{=} + \dfrac{1}{F(q)}\sum_{n\geq 0}\dfrac{(-1)^n 
q^{4k\binom{n}{2}+(2k(j+2)+1)n}(1-q^{2(n+1)})\cdots 
(1-q^{2(n+j)})(1-q^{2(n+j+1{})})}{(1+q^{2n+1})(1+q^{2(n+1)+1})\cdots 
(1+q^{2(n+j)+1})}\nonumber\\
 & =  \dfrac{1}{F(q)}\sum_{n\geq 0}\dfrac{(-1)^n 
q^{4k\binom{n}{2}+(2k(j+2)+1)n}(1-q^{2(n+1)})\cdots 
(1-q^{2(n+j)})(1-q^{2(n+j+1{})})}{(1+q^{2n+1})(1+q^{2(n+1)+1})\cdots 
(1+q^{2(n+j+1{})+1})}\nonumber\\
 & \hspace{6em} 
\cdot(-q^{4kn+2k(j+2)+1-2(n+1)}(1+q^{2n+1})+(1+q^{2(n+j+1{})+1}))\nonumber\\
  & =  \dfrac{1}{F(q)}\sum_{n\geq 0}\dfrac{(-1)^n 
q^{4k\binom{n}{2}+(2k(j+2)+1)n}(1-q^{2(n+1)})\cdots 
(1-q^{2(n+j)})(1-q^{2(n+j+1{})})}{(1+q^{2n+1})(1+q^{2(n+1)+1})\cdots 
(1+q^{2(n+j+1{})+1})}\nonumber\\
  & \hspace{6em} 
\cdot(1-q^{2k(2n+j+2)}+q^{2(n+j+1{})+1}(1-q^{2k(2n+j+2)-2(2n+1)-2(j+1{})}
))\nonumber\\
  & =  \dfrac{1}{F(q)}\sum_{n\geq 0}\dfrac{(-1)^n 
q^{4k\binom{n}{2}+(2k(j+2)+1)n}(1-q^{2(n+1)})\cdots 
(1-q^{2(n+j)})(1-q^{2(n+j+1{})})}{(1+q^{2n+1})(1+q^{2(n+1)+1})\cdots 
(1+q^{2(n+j+1{})+1})}\nonumber\\
  & \hspace{6em} \cdot(1-q^{2k(2n+j+2)}+q^{2(n+j+1{})+1}(1-q^{2(k-1)(2n+j+2)})),
 \end{align*}
which is $\widetilde{G}_{j+1,1}(q)$, as desired.
 
We now prove (\ref{altsum}) by induction on $j$;
we shall need the ``edge-matching'' formula (\ref{edge-matching}) in this 
argument. For the case $j=0$ we multiply the numerator and denominator of 
\eqref{eq:shelf0Gi} by $1+q^{2n+1}$ to obtain
\begin{align*}
 G_i(q) & =\dfrac{1}{F(q)}\sum_{n\geq 0}\dfrac{(-1)^n 
q^{4k\binom{n}{2}+(2k+2i-1)n}(1+q^{2n+1})(1-q^{(2k-2i+1)(2n+1)})}{1+q^{2n+1}}
\nonumber\\
 & =\dfrac{1}{F(q)} \sum_{n\geq 0}\dfrac{(-1)^n 
q^{4k\binom{n}{2}+(2k+2i-1)n}(1-q^{2(k-i+1)(2n+1)}+q^{2n+1}(1-q^{2(k-i)(2n+1)}))
}{1+q^{2n+1}}
\end{align*}
for $1\leq i\leq k$, as desired.

Take $j\ge 0$. Assume that~\eqref{altsum} holds for all $i=1,\dots,k$.
We shall show that it holds for $j+1$ and all $i=1,\dots,k$ using induction on 
$i$.
The case $i=1$ holds by ``edge-matching'' since $G_{(k-1)(j+1)+1}=G_{(k-1)j+k}$.

Now assume that (\ref{altsum}) holds for $j+1$ and $i-1$ where $i=2,\dots,k-1$. 
By definition and the inductive hypothesis on $j$,
\begin{align}\label{inductioncalc}
& G_{(k-1)(j+1)+i}(q)  +q^{-1} G_{(k-1)(j+1)+i-1}(q) = 
\dfrac{G_{(k-1)j+k-i+1}(q)-G_{(k-1)j+k-i+2}(q)}{q^{2(j+1)(i-1)}}\nonumber\\
& \, = \dfrac{1}{q^{2(j+1)(i-1)}F(q)}\sum_{n\geq 0}\dfrac{(-1)^n 
q^{4k\binom{n}{2}+(2k(j+2)+1)n}(1-q^{2(n+1)})\cdots 
(1-q^{2(n+j)})}{(1+q^{2n+1})(1+q^{2(n+1)+1})\cdots 
(1+q^{2(n+j)+1})}\nonumber\\
&\hspace{11em}\cdot\left(q^{-2in}(1-q^{2i(2n+j+1)}+q^{2(n+j)+1}(1-q^{
2(i-1)(2n+j+1)}
)\right.\nonumber\\
 &\hspace{11em} \left. 
-q^{-2(i-1)n}(1-q^{2(i-1)(2n+j+1)}+q^{2(n+j)+1}(1-q^{2(i-2)(2n+j+1)})\right).
\end{align}
The term in parentheses in the last two lines of (\ref{inductioncalc}) may be 
rewritten as
\begin{align*}
q^{-2in}&(1-q^{2i(2n+j+1)})-q^{-2(i-1)n}(1-q^{2(i-1)(2n+j+1)})\nonumber\\
&  + 
q^{2j+1}(q^{-2(i-1)n}(1-q^{2(i-1)(2n+j+1)})-q^{-2(i-2)n}(1-q^{2(i-2)(2n+j+1)}
))\nonumber\\
= \mbox{} & q^{-2in}(1-q^{2n})+q^{2(i-1)(n+j+1)}(1-q^{2(n+j+1)})\nonumber\\
& + q^{2j+1}(q^{-2(i-1)n}(1-q^{2n})+q^{2(i-2)(n+j+1)}(1-q^{2(n+j+1)})),
\end{align*}
so we obtain
\begin{align}\label{twosums}
& G_{(k-1)(j+1)+i}(q)  +q^{-1} G_{(k-1)(j+1)+i-1}(q)\nonumber\\
& = \dfrac{1}{q^{2(j+1)(i-1)}F(q) }\sum_{n\geq 0}\dfrac{(-1)^n 
q^{4k\binom{n}{2}+(2k(j+2)+1)n}(1-q^{2(n+1)})\cdots 
(1-q^{2(n+j)})}{(1+q^{2n+1})(1+q^{2(n+1)+1})\cdots 
(1+q^{2(n+j)+1})}\nonumber\\
& \hphantom{=} 
\cdot\left(q^{-2in}(1-q^{2n})+q^{2(i-1)(n+j+1)}(1-q^{2(n+j+1)}
)\right)\nonumber\\
& + \dfrac{q^{2j+1}}{q^{2(j+1)(i-1)}F(q)}\sum_{n\geq 0}\dfrac{(-1)^n 
q^{4k\binom{n}{2}+(2k(j+2)+1)n}(1-q^{2(n+1)})\cdots 
(1-q^{2(n+j)})}{(1+q^{2n+1})(1+q^{2(n+1)+1})\cdots 
(1+q^{2(n+j)+1})}\nonumber\\
& \hphantom{=}
\cdot\left(q^{-2(i-1)n}(1-q^{2n})+q^{2(i-2)(n+j+1)}(1-q^{2(n+j+1)})\right).
\end{align}

We now analyze the first sum on the right-hand side of~\eqref{twosums}:
\begin{align}\label{inductioncalc2}
& \sum_{n\geq 0}\dfrac{(-1)^n 
q^{4k\binom{n}{2}+(2k(j+2)+1)n}(1-q^{2(n+1)})\cdots 
(1-q^{2(n+j)})}{(1+q^{2n+1})(1+q^{2(n+1)+1})\cdots 
(1+q^{2(n+j)+1})}\nonumber\\
&\hspace{2em}\cdot\left(q^{-2in}(1-q^{2n})+q^{2(i-1)(n+j+1)}(1-q^{2(n+j+1)}
)\right)\nonumber\\
& = \sum_{n\geq 1}\dfrac{(-1)^n 
q^{4k\binom{n}{2}+(2k(j+2)+1)n}(1-q^{2n})(1-q^{2(n+1)})\cdots 
(1-q^{2(n+j)})}{(1+q^{2n+1})(1+q^{2(n+1)+1})\cdots (1+q^{2(n+j)+1})}
q^{-2in}\nonumber\\
& \hphantom{=} + \sum_{n\geq 0}\dfrac{(-1)^n 
q^{4k\binom{n}{2}+(2k(j+2)+1)n}(1-q^{2(n+1)})\cdots 
(1-q^{2(n+j)})(1-q^{2(n+j+1)})}{(1+q^{2n+1})(1+q^{2(n+1)+1})\cdots 
(1+q^{2(n+j)+1})} q^{2(i-1)(n+j+1)}\nonumber\\
& = -\sum_{n\geq 0}\dfrac{(-1)^n 
q^{4k\binom{n}{2}+4kn+(2k(j+2)+1)(n+1)}(1-q^{2(n+1)})(1-q^{2(n+2)})\cdots 
(1-q^{2(n+j+1)})}{(1+q^{2(n+1)+1})(1+q^{2(n+2)+1})\cdots 
(1+q^{2(n+j+1)+1})} q^{-2i(n+1)}\nonumber\\
& \hphantom{=} + \sum_{n\geq 0}\dfrac{(-1)^n 
q^{4k\binom{n}{2}+(2k(j+2)+1)n}(1-q^{2(n+1)})\cdots 
(1-q^{2(n+j)})(1-q^{2(n+j+1)})}{(1+q^{2n+1})(1+q^{2(n+1)+1})\cdots 
(1+q^{2(n+j)+1})} q^{2(i-1)(n+j+1)}\nonumber\\
& = \sum_{n\geq 0}\dfrac{(-1)^n 
q^{4k\binom{n}{2}+(2k(j+2)+2i-1)n}(1-q^{2(n+1)})\cdots 
(1-q^{2(n+j+1)})}{(1+q^{2n+1})(1+q^{2(n+1)+1})\cdots 
(1+q^{2(n+j+1)+1})}\nonumber\\
& \hspace{3em} \cdot
\left(-q^{4kn+2k(j+2)+1-2in+2n-2i(n+1)}(1+q^{2n+1})+q^{-2in+2n+2(i-1)(n+j+1)}
(1+q^{2(n+j+1)+1})\right).
\end{align}
We can simplify the term in parentheses in the last line of 
(\ref{inductioncalc2}) to obtain
\begin{align*}  
-&q^{2n(2k-2i+1)+2k(j+1)+2k-2i+1}(1+q^{2n+1})+q^{2(j+1)(i-1)}(1+q^{2(n+j+1)+1}
)\nonumber\\
& = 
q^{2(j+1)(i-1)}(-q^{(2n+1)(2k-2i+1)+2(j+1)(k-i+1)}(1+q^{2n+1})+1+q^{2(n+j+1)+1}
)\nonumber\\
& = 
q^{2(j+1)(i-1)}(1-q^{2(k-i+1)(2n+1+j+1)}+q^{2(n+j+1)+1}(1-q^{
2(2n+1)(k-i)+2(j+1)(k-i)}))\nonumber\\
& = 
q^{2(j+1)(i-1)}(1-q^{2(k-i+1)(2n+j+2)}+q^{2(n+j+1)+1}(1-q^{2(k-i)(2n+j+2)})).
\end{align*}
The second sum in (\ref{twosums}) is exactly the same as the first except that 
$i$ is replaced by $i-1$, so we now see that (\ref{twosums}) becomes
\begin{align}\label{finalformula}
& G_{(k-1)(j+1)+i}(q)  +q^{-1} G_{(k-1)(j+1)+i-1}(q)\nonumber\\
& = \dfrac{1}{F(q)}\sum_{n\geq 0}\dfrac{(-1)^n 
q^{4k\binom{n}{2}+(2k(j+2)+2i-1)n}(1-q^{2(n+1)})\cdots 
(1-q^{2(n+j+1)})}{(1+q^{2n+1})(1+q^{2(n+1)+1})\cdots 
(1+q^{2(n+j+1)+1})}\nonumber\\
& \hspace{6em} 
\cdot\left(1-q^{2(k-i+1)(2n+j+2)}+q^{2(n+j+1)+1}(1-q^{2(k-i)(2n+j+2)}
)\right)\nonumber\\
& \hphantom{=} + 
\dfrac{q^{2j+1}q^{2(j+1)(i-2)}}{q^{2(j+1)(i-1)}F(q)}\sum_{n\geq 0}\dfrac{(-1)^n 
q^{4k\binom{n}{2}+(2k(j+2)+2(i-1)-1)n}(1-q^{2(n+1)})\cdots 
(1-q^{2(n+j+1)})}{(1+q^{2n+1})(1+q^{2(n+1)+1})\cdots 
(1+q^{2(n+j+1)+1})}\nonumber\\
& \hspace{11em}\,\, \cdot
\left(1-q^{2(k-i+2)(2n+j+2)}+q^{2(n+j+1)+1}(1-q^{2(k-i+1)(2n+j+2)})\right). 
\end{align}
The theorem now follows because the second term on the right-hand side of 
(\ref{finalformula}) is $q^{-1} G_{(k-1)(j+1)+i-1}(q)$ by the inductive 
hypothesis on $i$.
\end{proof}

\begin{rema}\label{rem:uniqueF}
It is important to note that the common factor $F(q)$ 
plays no role in the proof here, except for
the identification with the original $G_i(q)$, $i=1,\dots,k$. 
The factor $F(q)$ could have been replaced
with any nonzero formal power series in $q$, 
and every step of the proof would 
have been identical (beyond the identification with the original 
$G_i\left(q\right)$, $i=1,\dots,k$),
and equivalent to the existing step.
However, $F(q)$ is crucial for the ``Empirical Hypothesis,'' which, 
in fact, as we shall see,
uniquely determines this factor.
\end{rema}

\begin{rema}
The nested induction (on $i$ as well as on $j$) is a feature 
of the proof of Theorem \ref{thm:closedform} that is 
not present in the proof of Theorem 2.1 of \cite{LZ}. 
\end{rema}

\section{The Empirical Hypothesis}
\label{sec:EH}
As a consequence of Theorem \ref{thm:closedform}, we are now in a position
to formulate and prove the Empirical Hypothesis, which will
be the main ingredient in the motivated proof of the 
G\"ollnitz-Gordon-Andrews identities.

\begin{theo}[Empirical Hypothesis]\label{EH}
For any $j\geq 0$ and $i=1,\dots,k$,
\begin{equation*}
G_{(k-1)j+i}(q)=1+q^{2j+1} \gamma(q) 
\end{equation*}
for some
\begin{equation*}
\gamma(q) \in \mathbb{C}[[q]].
\end{equation*}
\end{theo}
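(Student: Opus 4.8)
The plan is to read everything off the closed-form expression \eqref{altsum} of Theorem~\ref{thm:closedform}, working modulo powers of $q$. Fix $j\geq 0$ and $1\leq i\leq k$ and write $G:=G_{(k-1)j+i}(q)$. First note $G\in\mathbb{C}[[q]]$: in \eqref{altsum} each summand is a polynomial in $q$ multiplied by the invertible power series $1/F(q)$ and $1/(1+q^{2m+1})$ (for $n\le m\le n+j$), and the $n$-th summand has $q$-order tending to infinity, so the sum converges $q$-adically to a power series. It therefore suffices to prove the single congruence $G\equiv 1\pmod{q^{2j+1}}$ — this already forces the constant term to be $1$ — for then $\gamma(q):=(G-1)/q^{2j+1}\in\mathbb{C}[[q]]$.

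Next I would discard all of \eqref{altsum} except the $n=0$ term. For $n\geq 1$ the exponent $4k\binom{n}{2}+(2k(j+1)+2i-1)n$ is at least $2k(j+1)+2i-1\geq 4(j+1)+1=4j+5>2j$, and every remaining factor in that summand — the products $(1-q^{2(n+1)})\cdots(1-q^{2(n+j)})$, the trailing factor $1-q^{2(k-i+1)(2n+j+1)}+q^{2(n+j)+1}(1-q^{2(k-i)(2n+j+1)})$, the denominators $(1+q^{2(n+m)+1})$, and $1/F(q)$ — has constant term $1$ and so does not lower the order. Hence those summands are divisible by $q^{2j+1}$. In the $n=0$ summand the piece $-q^{2(k-i+1)(j+1)}$ has exponent $\geq 2(j+1)=2j+2>2j$, and the piece $q^{2j+1}\bigl(1-q^{2(k-i)(j+1)}\bigr)$ is visibly divisible by $q^{2j+1}$, so
\[
G\;\equiv\;\frac{1}{F(q)}\cdot\frac{(1-q^{2})(1-q^{4})\cdots(1-q^{2j})}{(1+q)(1+q^{3})\cdots(1+q^{2j+1})}\pmod{q^{2j+1}}.
\]
(The apparent ambiguity at the shelf edges, \eqref{eq:edgematchingalgebra}, is harmless since \eqref{altsum} is uniform in $i$.)

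Everything now rests on the elementary identity
\[
\frac{(1-q^{2})(1-q^{4})\cdots(1-q^{2j})}{(1+q)(1+q^{3})\cdots(1+q^{2j+1})}\;\equiv\;F(q)\pmod{q^{2j+1}}.
\]
I would prove this by rewriting $1+q^{2\ell+1}=(1-q^{4\ell+2})/(1-q^{2\ell+1})$, so the left-hand side equals $\prod_{m=1}^{2j+1}(1-q^{m})\big/\prod_{\ell=0}^{j}(1-q^{4\ell+2})$; extending the numerator to $\prod_{m\geq 1}(1-q^m)$ and the denominator to $\prod_{m\equiv 2\,(4)}(1-q^m)$ only inserts factors $1-q^{e}$ with $e>2j$, which are $\equiv 1\pmod{q^{2j+1}}$, so the quotient is congruent to $\prod_{m\not\equiv 2\,(4)}(1-q^m)=F(q)$ by \eqref{eq:Fq}. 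Combining with the previous display gives $G\equiv F(q)^{-1}F(q)=1\pmod{q^{2j+1}}$, as desired. The only genuine content is this last telescoping identity together with the order estimates; the main thing to watch is the bookkeeping of exponents in the $n=0$ term, so that no contribution of $q$-order $\leq 2j$ is dropped by mistake.
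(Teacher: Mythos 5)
Your proof is correct and follows essentially the same route as the paper's: both reduce to the $n=0$ term of the closed form \eqref{altsum} via the same order estimate on the exponent $4k\binom{n}{2}+(2k(j+1)+2i-1)n$, and then verify that the surviving ratio of products is $\equiv 1 \pmod{q^{2j+1}}$. The only difference is cosmetic bookkeeping: you telescope via $1+q^{2\ell+1}=(1-q^{4\ell+2})/(1-q^{2\ell+1})$ so as to compare directly with $F(q)$, whereas the paper splits the same products by residue classes mod $4$ and cancels factor by factor.
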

\begin{rema}
Note that since 
$G_{(k-1)j+k}(q)=G_{(k-1)(j+1)+1}(q)$, for $i=k$ Theorem \ref{EH} implies that we 
can write $G_{(k-1)j+k}(q)=1+q^{2j+3}\gamma(q)$ where $\gamma(q)$ is some formal power 
series.
\end{rema}
\begin{proof}
Since 
$$
4k\binom{n}{2}+(2k(j+1)+2i-1)n\geq 2k(j+1)+1\geq 2j+3
$$
for $n\geq 1$, it 
suffices to examine the $n=0$ term in (\ref{altsum}), which is
\begin{align}\label{nequals0}
& \dfrac{(1-q^2)\cdots 
(1-q^{2j})(1-q^{2(k-i+1)(j+1)}+q^{2j+1}(1-q^{2(k-i)(j+1)}))}{(1+q)(1+q^3)\cdots 
(1+q^{2j+1})\prod\limits_{m\,\not\equiv\, 2\,(\mathrm{mod}\,4)} (1-q^m)}.
\end{align}
Note first that 
$$
1-q^{2(k-i+1)(j+1)}+q^{2j+1}(1-q^{2(k-i)(j+1)})=1+q^{2j+1} 
h(q)
$$
where $h(q)$ is a polynomial in $q$. Now, the remaining factors in the 
numerator of (\ref{nequals0}) can be expressed as:
\begin{align}
(1-q^2)\cdots (1-q^{2j})& =\left(\prod_{\substack{1\leq m\leq 2j\\m\,\equiv\, 
0\,(\mathrm{mod}\,4)}} (1-q^m)\right)\left(\prod_{\substack{1\leq m\leq 
j\\m\,\mathrm{odd}}} (1-q^m)(1+q^m)\right),
\end{align}
while the denominator of (\ref{nequals0}) can be rewritten
\begin{align*}
& (1+q)(1+q^3)\cdots (1+q^{2j+1})\prod_{m\,\not\equiv\, 2\,(\mathrm{mod}\,4)} 
(1-q^m)=\nonumber\\
& \left(\prod_{\substack{1\leq m\leq 2j+1\\ m\,\mathrm{odd}}} 
(1+q^m)(1-q^m)\right)\left(\prod_{\substack{1\leq m\leq 2j\\ m\,\equiv\, 
0\,(\mathrm{mod}\,4)}} (1-q^m)\right)\left(\prod_{\substack{m>2j+1\\ 
m\,\not\equiv\, 
2\,(\mathrm{mod}\,4)}} (1-q^m)\right)=\nonumber\\
& \left(\prod_{\substack{1\leq m\leq j\\ m\,\mathrm{odd}}} 
(1+q^m)(1-q^m)\right)\left(\prod_{\substack{j< m\leq 2j+1\\ m\,\mathrm{odd}}} 
(1-q^{2m})\right)\left(\prod_{\substack{1\leq m\leq 2j\\ m\,\equiv\, 
0\,(\mathrm{mod}\,4)}} (1-q^m)\right)\left(\prod_{\substack{m>2j+1\\ 
m\,\not\equiv\, 
2\,(\mathrm{mod}\,4)}} (1-q^m)\right).
\end{align*}
Thus by cancellation we see that (\ref{nequals0}) becomes
\begin{align*}
\dfrac{1+q^{2j+1} h(q)}{\left(\prod\limits_{\substack{j< m\leq 2j+1\\ 
m\,\mathrm{odd}}} (1-q^{2m})\right)\left(\prod\limits_{\substack{m>2j+1\\ 
m\,\not\equiv\, 
2\,(\mathrm{mod}\,4)}} (1-q^m)\right)}=1+q^{2j+1} g(q)
\end{align*}
where $g(q)$ is a formal power series. 
\end{proof}

\begin{rema}
It is clear from the proof of Theorem \ref{EH} that $F(q)$ 
(recall \eqref{eq:Fq}) is the unique formal 
power series in the denominator of $G_{(k-1)j+i}(q)$
that yields the Empirical Hypothesis.
\end{rema}

\begin{rema}\label{rem:SEH}
Analyzing the polynomial $h(q)$ in the proof above, we can strengthen the 
Empirical Hypothesis as follows: Observe that
\begin{equation*}
 h(q)=-q^{2(k-i)(j+1)+1}+1-q^{2(k-i)(j+1)},
\end{equation*}
so that for $i\neq k$ and $j\ge 0$,
\begin{align}
G_{(k-1)j+i}(q) = 1+q^{2j+1}+\cdots, \label{SEHi<k} 
\end{align}
and for $i=k$,
\begin{align}
G_{(k-1)j+k}(q) = G_{(k-1)(j+1)+1}(q)=1+q^{2j+3}+\cdots \label{SEHi=k}.
\end{align}
We call \eqref{SEHi<k} and \eqref{SEHi=k} collectively the \textit{Strong 
Empirical Hypothesis}.
\end{rema}

\begin{rema}\label{rem:WEH}
As we shall see in the proof of Theorem \ref{thm:GGA} below, the only form of 
the Empirical Hypothesis that is logically required for the proof of the \gga 
identities is a weaker one, which states that for any positive integer $l$, 
there exists a positive integer $f(l)$ such that 
\begin{align*}
G_l(q) \in 1+q^{f(l)}\mathbb{C}[[q]] 
\end{align*}
with
\begin{align*}
\lim_{l\rightarrow\infty}f(l)=\infty. 
\end{align*}
We call this the \textit{Weak Empirical Hypothesis}.
\end{rema}

\begin{rema}\label{rem:EHfromSums}
All of the successively sharper forms of Empirical Hypothesis follow easily from 
the combinatorial sum sides of the \gga identities, but the point of Theorems 
\ref{thm:closedform} and \ref{EH} is to obtain them from the product sides.
\end{rema}

\section{Matrix interpretation and consequences}
\label{sec:matrixinterp}
As in \cite{LZ}, we now study the recursions \eqref{recursion} using a matrix 
approach, which is suggested by rewriting \eqref{recursion} in the form
\begin{equation}\label{recursion2}
 q^{-1} 
G_{(k-1)j+i-1}(q)+G_{(k-1)j+i}(q)=q^{2j(i-1)}G_{(k-1)(j-1)+k-i+1}(q)-q^{2j(i-1)}
G_{(k-1)(j-1)+k-i+2}(q)
\end{equation}
for $j\geq 1$ and $i=2,\dots,k$. Recall also the edge-matching tautology
\begin{equation}\label{edgematching}
 G_{(k-1)j+1}(q)=G_{(k-1)(j-1)+k}(q)
\end{equation}
for $j\geq 1.$

Define the vector
 \begin{equation*}
{\bf G}_{(0)} = 
\left[ \begin{array}{c}
G_{1}(q) \\
\vdots \\
G_{k}(q)
\end{array} \right]
\end{equation*}
and more generally for each $j\geq 0$ define the vector
 \begin{equation*}
{\bf G}_{(j)} = 
\left[ \begin{array}{c}
G_{(k-1)j+1}(q) \\
\vdots \\
G_{(k-1)j+k}(q)
\end{array} \right].
\end{equation*}
For each $j \ge 1$ set
\begin{equation*}
{\bf B}_{(j)} = 
\left[ \begin{array}{cccccc}
0           & 0            & \cdots   & 0       & 0        & 1       \\
0           & 0            & \cdots   & 0       & q^{-2j}   & -q^{-2j} \\
0           & 0            & \cdots   & q^{-4j} & -q^{-4j} & 0       \\
\vdots      & \vdots       & \swarrow & \vdots  & \vdots   & \vdots  \\
0           & q^{-2(k-2)j}  & \cdots   & 0       & 0        & 0       \\
q^{-2(k-1)j} & -q^{-2(k-1)j} & \cdots   & 0       & 0        & 0
\end{array} \right]
\end{equation*}
and set
\begin{equation*}
{\bf C} = 
\left[ \begin{array}{cccccc}
1           & 0            & 0 & \cdots          & 0        & 0       \\
q^{-1}           & 1       & 0    & \cdots          & 0  & 0 \\
0           & q^{-1}       & 1    & \cdots    & 0 & 0       \\
\vdots      & \vdots     & \vdots & \searrow   & \vdots   & \vdots  \\
0           &  0 & 0 & \cdots          & 1        & 0       \\
0 & 0 & 0 & \cdots          & q^{-1}        & 1
\end{array} \right] .
\end{equation*}
Then (\ref{recursion2}) and (\ref{edgematching}) can be combined in the single 
matrix equation
\begin{equation}\label{matrixrecursion}
 \mathbf{C}\mathbf{G}_{(j)}=\mathbf{B}_{(j)} \mathbf{G}_{(j-1)}
\end{equation}
for each $j\geq 1$.

It is easy to check that the inverse of 
$\mathbf{B}_{(j)}$ is the matrix
\begin{equation}\label{Aj}
{\bf A}_{(j)} = {\bf B}_{(j)}^{-1} = 
\left[ \begin{array}{ccccc}
1 & q^{2j} & q^{4j} & \cdots  & q^{2( k-1) j} \\
\vdots & \vdots &  \vdots & \swarrow  & \vdots \\
1 & q^{2j} & q^{4j}  & \cdots & 0 \\
1 & q^{2j} & 0 &  \cdots & 0 \\
1 & 0 & 0 & \cdots  & 0
\end{array} \right].
\end{equation}
Setting 
$$
\mathbf{A}_{(j)}'=\mathbf{A}_{(j)}\mathbf{C}
$$
for each $j\geq 1$, we have:
\begin{equation}\label{A'j}
 \mathbf{A}'_{(j)}=\mathbf{A}_{(j)}\mathbf{C} = \left[ \begin{array}{ccccc}
1+q^{2j-1} & q^{2j}+q^{4j-1}  & \cdots  & q^{2(k-2)j}+q^{2(k-1)j-1} & q^{2( 
k-1) 
j} \\
1+q^{2j-1} & q^{2j}+q^{4j-1}   & \cdots & q^{2(k-2)j}& 0 \\
\vdots & \vdots  & \swarrow & \vdots  & \vdots \\
1+q^{2j-1} & q^{2j}  &  \cdots & 0 &0 \\
1 & 0 & \cdots & 0 & 0
\end{array} \right].
\end{equation}

For the remainder of this paper, we fix an integer
\begin{equation} \label{eq:J}
J \geq 0,
\end{equation}
which will indicate a ``starting shelf'' that need not be 
the 0$^\text{th}$ shelf. We will use 
\eqref{matrixrecursion} to 
express ${\bf G}_{(J)}$ in terms of the ${\bf G}_{(j)}$, $j \ge J$,
and we will then use the Empirical Hypothesis to 
determine an expression for ${\bf G}_{(J)}$ 
which, in the case $J=0$, will be different from the 
original definition \eqref{Gidef}.

For any $j\geq J$, repeated application of 
\eqref{matrixrecursion} gives us
\begin{equation*}
{\bf G}_{(J)} = 
\mathbf{A}_{(J+1)}'\cdots\mathbf{A}_{(j)}'\mathbf{G}_{(j)}.
\end{equation*}
That is,
\begin{equation}\label{GJintermsofGj}
\left[ \begin{array}{c}
G_{(k-1)J+1}(q) \\
\vdots \\
G_{(k-1)J+k}(q)\end{array} \right] = 
\mathbf{A}_{(J+1)}'\cdots\mathbf{A}_{(j)}'\mathbf{G}_{(j)}={}^J\mathbf{h}^{(j)}
\mathbf
{G}_{(j)},
\end{equation}
where 
\begin{equation*}
{}^J\mathbf{h}^{(j)}=\mathbf{A}_{(J+1)}'\cdots\mathbf{A}_{(j)}'
\end{equation*}
for $j> J$. (We take ${}^J\mathbf{h}^{(J)}$ to be the empty product of 
matrices, i.e., the identity matrix.)

For each $j\geq J$ and $i=1,\dots,k$, define the row vector 
${}^J_i\mathbf{h}^{(j)}(q)$ to be the $i^\text{th}$ row of 
${}^J\mathbf{h}^{(j)}$, and for $l=1,\dots,k$, 
define ${}^J_i h^{(j)}_l(q)$ to be the $l^\text{th}$ component of  
${}^J_i\mathbf{h}^{(j)}(q)$. Then it is clear from (\ref{GJintermsofGj}) that 
for 
any $j\geq J$ and $i=1,\dots,k$,
\begin{equation}\label{G(k-1)J+i}
G_{(k-1)J+i}(q)  =  
{}^J_ih^{(j)}_1(q) G_{(k-1) j +1}(q) + \cdots + {}^J_ih^{(j)}_k(q) G_{(k-1) j + 
k}(q).
\end{equation}
Moreover, the ${}^J_i h^{(j)}_l(q)$ are polynomials in $q$ with nonnegative 
integral coefficients, since the entries of $\mathbf{A}'_{(j)}$ are polynomials
in $q$ with nonnegative integral coefficients for each $j$ by \eqref{A'j}. In 
addition, we have 
the following proposition, which will be needed for the combinatorial 
interpretation of the polynomials ${}^J_i h^{(j)}_l(q)$ in the next section:
\begin{propo}\label{hdef}
The polynomials ${}^J_i h^{(j)}_l(q)$ are determined by the initial  conditions
\begin{equation}\label{init}
{}^J_i h^{(J)}_l(q)=\delta_{i,l}
\end{equation}
and the recursions
\begin{equation}\label{hrecur}
{}^J_i h^{(j)}_l(q)=q^{2j(l-1)}\sum_{m=1}^{k-l+1} {}^J_i h^{(j-1)}_m (q) 
+q^{2jl-1}\sum_{m=1}^{k-l} {}^J_i h^{(j-1)}_m(q)
\end{equation}
for $j> J$.
\end{propo}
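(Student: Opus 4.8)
The plan is to read off both assertions directly from the definition ${}^J\mathbf{h}^{(j)}=\mathbf{A}_{(J+1)}'\cdots\mathbf{A}_{(j)}'$ together with the explicit form of $\mathbf{A}'_{(j)}$ in \eqref{A'j}. The initial condition \eqref{init} is immediate: ${}^J\mathbf{h}^{(J)}$ is by convention the empty product of matrices, hence the $k\times k$ identity matrix, so its $i$-th row has $l$-th entry $\delta_{i,l}$. For the recursion, splitting off the last factor gives the one-step relation ${}^J\mathbf{h}^{(j)}={}^J\mathbf{h}^{(j-1)}\mathbf{A}'_{(j)}$ for $j>J$; passing to the $i$-th row and then to components yields
\[
{}^J_i h^{(j)}_l(q)=\sum_{m=1}^{k}{}^J_i h^{(j-1)}_m(q)\,\bigl(\mathbf{A}'_{(j)}\bigr)_{m,l}.
\]
Thus everything comes down to identifying the entries of $\mathbf{A}'_{(j)}$.

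From \eqref{Aj}, $\bigl(\mathbf{A}_{(j)}\bigr)_{m,l}=q^{2j(l-1)}$ when $l\le k-m+1$ and $\bigl(\mathbf{A}_{(j)}\bigr)_{m,l}=0$ otherwise. Since $\mathbf{C}$ has $1$'s on its main diagonal and $q^{-1}$'s on the subdiagonal, $\mathbf{A}'_{(j)}=\mathbf{A}_{(j)}\mathbf{C}$ satisfies
\[
\bigl(\mathbf{A}'_{(j)}\bigr)_{m,l}=\bigl(\mathbf{A}_{(j)}\bigr)_{m,l}+q^{-1}\bigl(\mathbf{A}_{(j)}\bigr)_{m,l+1},
\]
which equals $q^{2j(l-1)}+q^{2jl-1}$ when $l\le k-m$, equals $q^{2j(l-1)}$ when $l=k-m+1$, and vanishes when $l>k-m+1$ --- exactly the pattern displayed in \eqref{A'j}. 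Substituting this into the coordinate formula and noting that, for fixed $l$, the constraint $l\le k-m+1$ on $m$ reads $m\le k-l+1$ while $l\le k-m$ reads $m\le k-l$, the sum over $m$ breaks into precisely
\[
{}^J_i h^{(j)}_l(q)=q^{2j(l-1)}\sum_{m=1}^{k-l+1}{}^J_i h^{(j-1)}_m(q)+q^{2jl-1}\sum_{m=1}^{k-l}{}^J_i h^{(j-1)}_m(q),
\]
which is \eqref{hrecur}. That \eqref{init} and \eqref{hrecur} determine the polynomials uniquely is then clear by induction on $j$, since \eqref{hrecur} expresses each ${}^J_i h^{(j)}_l$ for $j>J$ entirely in terms of the shelf-$(j-1)$ data.

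There is no substantial difficulty here; the only point that needs attention is the index bookkeeping --- correctly converting the support condition on the nonzero entries of $\mathbf{A}'_{(j)}$ (a condition on the row index $m$ for each fixed column $l$) into the upper summation limits $k-l+1$ and $k-l$, and checking the edge column $l=k$, where $k-l=0$ makes the second sum empty so that the $q^{2jl-1}$ term correctly drops, matching the final column of \eqref{A'j}.
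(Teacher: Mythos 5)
Your proof is correct and follows the same route as the paper, which simply observes that \eqref{init} is the definition of ${}^J\mathbf{h}^{(J)}$ as the identity matrix and that \eqref{hrecur} is the equation ${}^J\mathbf{h}^{(j)}={}^J\mathbf{h}^{(j-1)}\mathbf{A}'_{(j)}$ written in component form. You have merely carried out explicitly the index bookkeeping on the entries of $\mathbf{A}'_{(j)}=\mathbf{A}_{(j)}\mathbf{C}$ that the paper leaves to the reader, and your identification of the nonzero pattern and the resulting summation limits $k-l+1$ and $k-l$ is accurate.
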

\begin{proof}
The initial conditions (\ref{init}) amount to the definition of 
${}^J\mathbf{h}^{(J)}$, 
and the recursions (\ref{hrecur}) are simply the equation
\begin{equation*}
 {}^J\mathbf{h}^{(j)}(q)={}^J\mathbf{h}^{(j-1)}(q)\mathbf{A}_{(j)}'
\end{equation*}
in component form.
\end{proof}

Let us note a few consequences of \eqref{G(k-1)J+i}, \eqref{hrecur} and the 
Empirical 
Hypothesis. We first explain what we mean by the limit of a sequence $\lbrace 
H_j(q)\rbrace_{j=1}^\infty$ of formal power series: We say that
\begin{equation*}
\lim_{j\rightarrow\infty} H_j(q)
\end{equation*}
\emph{exists} if for each $i\geq 0$, there is some $J_i>0$ such that the coefficients 
of $q^i$ in each series $H_j(q)$ for $j\geq J_i$ are equal. In other words, 
the limit exists if the coefficient of each power of $q$ in $H_j(q)$ stabilizes 
as $j\rightarrow\infty$. In this case, the limit of the $H_j(q)$ is the formal 
power series for which the coefficient of $q^i$ is the coefficient of $q^i$ in 
any $H_j(q)$ for which $j\geq J_i$. This stabilization condition is
appropriate since the coefficients of the power series we consider
are integral.

Now we see that if $l\neq 1$, because of the factors multiplying each of the 
summations in \eqref{hrecur},
\begin{equation}
\lim_{j\rightarrow \infty}{}^J_i{h}^{(j)}_l(q) = 0.
\label{eq:l>1-hJijl_is_0}
\end{equation}
In case $l=1$, let $t$ be a nonnegative integer. Then, because of \eqref{G(k-1)J+i}, 
\eqref{eq:l>1-hJijl_is_0}, and the Empirical Hypothesis,
even in its weakest form,
it follows that the coefficient of $q^t$ on the left-hand side of \eqref{G(k-1)J+i} is 
the same as the coefficient of $q^t$ in ${}^J_i{h}^{(j')}_1(q)$ 
for any $j'$ sufficiently large. This shows that 
the coefficient of any power of $q$ in ${}^J_i{h}^{(j)}_1(q)$ 
stabilizes as $j\rightarrow \infty$. Hence, 
\begin{equation*}
\lim_{j\rightarrow \infty}{}^J_i{h}^{(j)}_1(q)
\end{equation*}
exists, which we denote by
\begin{equation*}
{}^J_i{h}^{(\infty)}_1(q).
\label{hinfty}
\end{equation*}
We have in fact proved more, which we record as a theorem:

\begin{theo}\label{thm:GisHinfty}
For any $J \ge 0$ and $i=1,\dots,k$, 
\begin{align}
G_{(k-1)J+i}(q) &={}^J_ih^{(\infty)}_1(q).\label{eq:GisHinfty}
\end{align}
\end{theo}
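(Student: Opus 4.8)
The plan is to extract equation~\eqref{eq:GisHinfty} from the work already in place, essentially by passing to the limit $j \to \infty$ in the identity~\eqref{G(k-1)J+i}. Fix $J \ge 0$ and $i \in \{1,\dots,k\}$. The starting point is~\eqref{G(k-1)J+i}, which expresses $G_{(k-1)J+i}(q)$ as the finite linear combination
$$
G_{(k-1)J+i}(q) = {}^J_i h^{(j)}_1(q)\, G_{(k-1)j+1}(q) + \cdots + {}^J_i h^{(j)}_k(q)\, G_{(k-1)j+k}(q),
$$
valid for \emph{every} $j \ge J$. The left-hand side does not depend on $j$, so the plan is to evaluate the coefficient of an arbitrary power $q^t$ on the right-hand side for all sufficiently large $j$ and show it equals the coefficient of $q^t$ in ${}^J_i h^{(\infty)}_1(q)$.

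First I would observe that, by the Empirical Hypothesis (Theorem~\ref{EH}, or even its weak form in Remark~\ref{rem:WEH}), the series $G_{(k-1)j+l}(q)$ has the form $1 + q^{2j+1}\gamma_l(q)$, so for $j$ large its coefficients agree with those of $1$ up through degree $t$. Combined with the fact, already established above the statement, that for $l \ne 1$ one has $\lim_{j\to\infty} {}^J_i h^{(j)}_l(q) = 0$ (the factors $q^{2j(l-1)}$ and $q^{2jl-1}$ in~\eqref{hrecur} force every coefficient to eventually vanish), each term with $l \ne 1$ contributes $0$ to the coefficient of $q^t$ once $j$ is large enough. The $l = 1$ term is ${}^J_i h^{(j)}_1(q)\, G_{(k-1)j+1}(q)$; since $G_{(k-1)j+1}(q) \equiv 1 \pmod{q^{2j+1}}$ and (as shown just before the theorem) the coefficients of ${}^J_i h^{(j)}_1(q)$ stabilize, for $j$ large the coefficient of $q^t$ in this product equals the coefficient of $q^t$ in ${}^J_i h^{(j)}_1(q)$, hence equals the coefficient of $q^t$ in ${}^J_i h^{(\infty)}_1(q)$. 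Thus the coefficient of $q^t$ in $G_{(k-1)J+i}(q)$ equals that of ${}^J_i h^{(\infty)}_1(q)$; since $t$ was arbitrary, the two formal power series are equal.

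Most of this reasoning is already carried out in the paragraph preceding the theorem statement (which is why the authors say they have ``in fact proved more''), so the proof reduces to assembling those observations and noting that the argument gives an exact identity rather than merely the existence of the limit. The only point requiring a little care is the interplay between the two sources of ``eventual agreement'': the stabilization of the coefficients of ${}^J_i h^{(j)}_1(q)$ and the congruence $G_{(k-1)j+1}(q) \equiv 1 \pmod{q^{2j+1}}$ — one must choose $j$ large enough that $2j+1 > t$ \emph{and} large enough that the degree-$\le t$ coefficients of ${}^J_i h^{(j)}_1(q)$ have already reached their limiting values; both are achievable simultaneously, so there is no genuine obstacle here. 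I expect this to be the only subtlety worth spelling out; everything else is bookkeeping on~\eqref{G(k-1)J+i} and~\eqref{hrecur}.
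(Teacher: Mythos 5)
Your proposal is correct and follows essentially the same route as the paper: the paper's proof of Theorem~\ref{thm:GisHinfty} likewise passes to the limit $j\to\infty$ term by term in~\eqref{G(k-1)J+i}, using~\eqref{eq:l>1-hJijl_is_0} to kill the $l\neq 1$ terms and the Empirical Hypothesis to replace $G_{(k-1)j+1}(q)$ by $1$. Your coefficient-of-$q^t$ bookkeeping is just the explicit content of the limit statements the paper invokes, so there is nothing further to add.
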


\begin{proof}
It follows  from \eqref{eq:l>1-hJijl_is_0} and the Empirical Hypothesis that
\begin{align*}
G_{(k-1)J+i}(q) &= 
\left(\lim\limits_{j\rightarrow\infty}{}^J_ih^{(j)}_1\right)
\left(\lim\limits_{j\rightarrow\infty} G_{(k-1)j+1}\right)
+ \cdots +
\left(\lim\limits_{j\rightarrow\infty}{}^J_ih^{(j)}_k\right)
\left(\lim\limits_{j\rightarrow\infty} G_{(k-1)j+k}\right)
\nonumber\\
& = {}^J_ih^{(\infty)}_1\cdot 1 + 0 +\cdots + 0\\
&={}^J_ih^{(\infty)}_1.
\end{align*}
\end{proof}

\begin{rema}\label{rem:essenceofG=h^infty}
Note that ``sum sides'' of the \gga identities 
have not emerged in Theorem \ref{thm:GisHinfty} yet. 
In other words, we have yet to attach combinatorial meaning to the 
right-hand side of \eqref{eq:GisHinfty} 
(we will do this in the following section.).
Nonetheless, Theorem \ref{thm:GisHinfty} is the essence of the
``motivated proof'' since 
the left-hand side of \eqref{eq:GisHinfty} is concerned with
certain congruence conditions (recall Remark \ref{prodsideinterp}) and these 
congruence conditions are completely invisible on the 
right-hand side of \eqref{eq:GisHinfty}. 
Of course, this remark also holds in the context
of the Rogers-Ramanujan identities (cf. \cite{AB}) and in the more general
setting of the Gordon identities as well (cf. \cite{LZ} and Remark 
\ref{ehrenpreis}).

\end{rema}

\section{Combinatorial interpretation of the sequence of expressions for 
$G_l(q)$}
\label{sec:combinterp}

We are now ready to complete our motivated proof.
We will deduce combinatorial 
interpretations of the formal 
power series on all of the shelves, and in particular, 
we will prove the \gga identities, which correspond to 
the 0$^\text{th}$ shelf, i.e., $J=0$.
We proceed as follows: We first present a combinatorial interpretation of the 
polynomials ${}^J_i h^{(j)}_l(q)$,
then we take the limit $j\rightarrow\infty$, and 
finally we use Theorem \ref{thm:GisHinfty}.

\begin{propo}\label{hinterp}
For $j\geq J+1$, the polynomial ${}^J_i h^{(j)}_l(q)$ is the generating 
function
for partitions with parts $(b_1,\ldots,b_s)$ (with $b_p\geq b_{p+1}$), 
satisfying 
the following conditions: 
 \begin{enumerate}
  \item No odd parts are repeated,
  \item $b_p-b_{p+k-1}\geq 2$ if $b_p$ is odd,
  \item $b_p-b_{p+k-1} >2$ if $b_p$ is even,
  \item the smallest part $b_s > 2J$,
  \item there are no more than $k-i$ parts equal to $2J+1$ or $2J+2$, and
  \item $b_1\leq 2j$ and there are exactly $l-1$ parts equal to $2j$.
 \end{enumerate}
\end{propo}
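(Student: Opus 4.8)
The plan is to prove Proposition \ref{hinterp} by induction on $j$, mirroring the recursive structure established in Proposition \ref{hdef}. Define, for $j \geq J+1$, the quantity $\widetilde{h}^{(j)}_{i,l}(q)$ to be the generating function for partitions satisfying conditions (1)--(6) of the statement (with the given $J$, $i$, $l$, $j$). The goal is to show $\widetilde{h}^{(j)}_{i,l}(q) = {}^J_i h^{(j)}_l(q)$, and by Proposition \ref{hdef} it suffices to check that the $\widetilde{h}$ satisfy the same initial conditions \eqref{init} at $j = J+1$ and the same recursion \eqref{hrecur} for $j > J+1$. (One should be slightly careful about the base case: \eqref{init} is stated at $j = J$, but the combinatorial description is only claimed for $j \geq J+1$, so the genuine base case of the combinatorial induction is $j = J+1$, whose value one reads off either directly or by applying \eqref{hrecur} once to the Kronecker deltas.)

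First I would handle the base case $j = J+1$ directly. Here condition (6) forces $b_1 \leq 2J+2$ with exactly $l-1$ parts equal to $2J+2$, and condition (4) forces all parts to be $2J+1$ or $2J+2$; combined with condition (1) (the odd part $2J+1$ is unrepeated) and the difference conditions (2)--(3), which are vacuous for so few distinct part-sizes, the only partitions are: $l-1$ copies of $2J+2$, optionally one copy of $2J+1$. Condition (5) caps the number of parts equal to $2J+1$ or $2J+2$ at $k-i$, i.e. $(l-1) + \varepsilon \leq k-i$ where $\varepsilon \in \{0,1\}$ is the multiplicity of $2J+1$. Summing $q^{(l-1)(2J+2)}(1 + \varepsilon\text{-term})$ over the allowed configurations should reproduce exactly the $(J+1)$-th entries of ${}^J\mathbf{h}^{(J+1)} = \mathbf{A}'_{(J+1)}$ as displayed in \eqref{A'j}, after setting $j = J+1$ there; this is a short finite check.

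For the inductive step, assume the combinatorial description holds for $j-1$ and establish it for $j$ by matching \eqref{hrecur}. The key bijective idea: given a partition counted by $\widetilde{h}^{(j)}_{i,l}$, strip off all parts equal to $2j$ (there are exactly $l-1$ of them, contributing $q^{2j(l-1)}$) and all parts equal to $2j-1$ (there are $0$ or $1$ of them, since odd parts are unrepeated, contributing a factor $1$ or $q^{2j-1}$), leaving a partition with largest part $\leq 2j-2$. If there are $m-1$ parts equal to $2j-2$ in what remains, then conditions (2)--(3) applied at the top relate $m$ to $l$: removing a $2j-1$ is possible only if its presence is compatible with the $k-1$ parts below it, and the difference conditions translate into the constraint that governs the ranges $\sum_{m=1}^{k-l+1}$ and $\sum_{m=1}^{k-l}$ appearing in \eqref{hrecur} — precisely, with a $2j$-part present at multiplicity $l-1$, the number of $(2j-2)$-parts allowed is at most $k - (l-1) - 1 + 1 = k-l+1$ when no $2j-1$ is present (difference $>2$ between the even part $2j$ and $b_{p+k-1}$), and at most $k-l$ when a $2j-1$ is present. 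The remaining partition, with its largest part $2j-2$ at multiplicity $m-1$ and largest part $\leq 2(j-1)$, is exactly one counted by $\widetilde{h}^{(j-1)}_{i,m}$, because conditions (1),(4),(5) are unaffected by the stripping and conditions (2),(3),(6) for the stripped partition are the restrictions of the originals. Assembling the generating function gives $\widetilde{h}^{(j)}_{i,l}(q) = q^{2j(l-1)}\sum_{m=1}^{k-l+1}\widetilde{h}^{(j-1)}_{i,m}(q) + q^{2j(l-1)+2j-1}\sum_{m=1}^{k-l}\widetilde{h}^{(j-1)}_{i,m}(q)$, which is \eqref{hrecur} since $2j(l-1)+2j-1 = 2jl-1$.

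The main obstacle will be verifying the difference-condition bookkeeping in the inductive step with complete care — i.e. checking that the upper limits $k-l+1$ and $k-l$ on $m$ are exactly right, and that conditions (2) and (3) for the stripped partition, together with the just-removed top parts, reconstitute conditions (2) and (3) for the original with no off-by-one errors at the junction between the removed parts ($2j$, $2j-1$) and the new top part ($2j-2$). In particular one must confirm that when a $2j-1$ part is present, the \emph{odd} part $2j-1$ imposes the weaker inequality $b_p - b_{p+k-1} \geq 2$ (condition (2)), which is automatically satisfied, while it is the even part $2j$ above it and the even part $2j-2$ below that force the strict inequalities reducing the count by one; tracking which of conditions (2)/(3) is active for each of the three part-sizes $2j, 2j-1, 2j-2$ and how many parts lie strictly between a given part and its $(k-1)$-th successor is the delicate combinatorial heart of the argument. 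Everything else is routine generating-function manipulation.
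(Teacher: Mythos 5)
Your proposal is correct and follows essentially the same route as the paper's proof: the same verification of the base case $j=J+1$ against the rows of $\mathbf{A}'_{(J+1)}$, and the same inductive step splitting the partitions according to whether a part equal to $2j-1$ is present. One concrete slip to fix in the write-up: you assert that the number of parts equal to $2j-2$ is at most $k-l+1$ (resp.\ $k-l$) in the two cases, whereas the correct bounds are $k-l$ (resp.\ $k-l-1$), since condition 3 applied to $b_1=2j$ forces $b_k<2j-2$ and the index $m$ in $\widetilde{h}^{(j-1)}_{i,m}$ equals the multiplicity of $2j-2$ \emph{plus one}; your summation ranges $\sum_{m=1}^{k-l+1}$ and $\sum_{m=1}^{k-l}$ are nevertheless the right ones, so the displayed recursion, and hence the conclusion, stand.
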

\begin{proof}
 Use ${}^J_i \widetilde{h}_l^{(j)}(q)$ to denote the generating function for 
partitions satisfying the conditions of the proposition. It is enough to verify 
that ${}^J_i \widetilde{h}_l^{(J+1)}(q)={}^J_i h^{(J+1)}_l(q)$ and that for 
$j\geq J+2$, 
${}^J_i \widetilde{h}_l^{(j)}(q)$ satisfies the recursion (\ref{hrecur}). 
 
 In the case $j=J+1$, there are two partitions satisfying the conditions of 
the 
proposition if $l\leq k-i$: one with $l-1$ parts equal to $2J+2$ and no parts 
equal to $2J+1$, and one with $l-1$ parts equal to $2J+2$ and one part equal to 
$2J+1$. If $l=k-i+1$, only the partition with $l-1$ parts equal to $2J+2$ and 
no 
parts equal 
to $2J+1$ satisfies the conditions of the proposition, and if $l>k-i+1$, no 
partitions satisfy the conditions. Thus
 \begin{equation*}
{}_i \widetilde{h}_l^{(J+1)}(q)=\left\lbrace\begin{array}{ccc} 
q^{2(l-1)(J+1)}+q^{2l(J+1)-1} & \mathrm{if} & l\leq k-i+1\\ 
q^{2(J+1)(l-1)} & \mathrm{if} & l=k-i+1\\
0 & \mathrm{if} & l>k-i+1,\end{array}\right.
\end{equation*}
which agrees with ${}^J_i h^{(J+1)}_l(q)$ by inspection of \eqref{A'j}
(setting $j=J+1$).

Now consider $j\geq J+2$. For convenience, we say that a partition is of type 
$(k-1,2J,k-i)$ if it satisfies the first five conditions of the proposition. 
Then 
the partitions of type $(k-1,2J,k-i)$ having largest part at most $2j$ and 
having exactly $l-1$ parts equal to $2j$ can be divided into two sets: the ones 
having 
no parts equal to $2j-1$ and the ones having one part equal to $2j-1$. In the 
first case, condition 3 of the proposition implies that at most $k-l$ parts 
can be equal to $2j-2$, which means that the number of partitions in the first 
case are enumerated by the generating function $q^{2j(l-1)}\sum_{m=1}^{k-l+1} 
{}^J_i\widetilde{h}^{(j-1)}_m$. 

In the second case, the partitions in question have the form $((2j)^{l-1}, 2j-1, 
b_{l+1},\ldots b_s)$, where $(b_{l+1},\ldots b_s)$ is a partition of type 
$(k-1,2J,k-i)$ having largest part at most $2j-2$. By condition 3 of the 
proposition, at most $k-l-1$ 
parts of $(b_{l+1},\ldots, b_s)$ can equal $2j-2$. Conversely, we note that 
$(b_{l+1},\ldots, 
b_s)$ can be any partition of type $(k-i,2J,k-1)$ having largest part at most 
$2j-2$ and at most $k-l-1$ parts equal to $2j-2$: the fact that at most $k-l-1$ 
parts are equal to $2j-2$ for 
$(b_{l+1},\ldots, 
b_s)$ implies that $b_k<2j-2$, which in turn implies conditions 2 and 3 for the 
larger
partition $((2j)^{l-1}, 2j-1, b_{l+1},\ldots, b_s)$, while conditions 1, 4, 5 
and 6 for the larger partition 
follow immediately from the same conditions
 for $(b_{l+1},\ldots,b_s)$. Thus 
the partitions in the second case are 
enumerated 
by the generating function $q^{2j(l-1)+2j-1}\sum_{m=1}^{k-l} 
{}^J_i\widetilde{h}^{(j-1)}_m$.

In conclusion, we see that for any $i,l=1,\dots,k$,
\begin{equation*}
 {}^J_i\widetilde{h}^{(j)}_l(q)=q^{2j(l-1)}\sum_{i=1}^{k-l+1} 
{}^J_i\widetilde{h}^{(j-1)}_m(q) +q^{2j(l-1)+2j-1}\sum_{m=1}^{k-l} 
{}^J_i\widetilde{h}^{(j-1)}_m(q),
\end{equation*}
which is the recursion (\ref{hrecur}). This proves the proposition.
\end{proof}

In the case $l=1$, condition 6 in Proposition \ref{hinterp} just says that the 
largest part of the partition is strictly less than $2j$. Thus ${}^J_i 
h^{(j)}_1(q)$ is the generating function for partitions satisfying conditions 
$1-5$ of Proposition \ref{hinterp} with largest part less than or equal to 
$2j-1$. Since each coefficient of ${}^J_i h^{(\infty)}_1(q)$ agrees with the 
corresponding coefficient of ${}^J_i h^{(j)}_1(q)$ for sufficiently large $j$, 
we have:
\begin{propo}\label{HinfinityCombinatorics}
The formal power series ${}^J_i h^{(\infty)}_1(q)$ is the generating function for 
partitions satisfying conditions $1-5$ of Proposition \ref{hinterp}, that is, 
${}^J_i h^{(\infty)}_1(q)$ is the generating function for partitions of type 
$(k-1,2J,k-i)$.
\end{propo}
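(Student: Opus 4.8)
The plan is to derive this directly from Proposition \ref{hinterp} by specializing to $l=1$ and then passing to the limit $j\to\infty$. First I would observe that when $l=1$, condition (6) of Proposition \ref{hinterp} reads ``$b_1\leq 2j$ and there are exactly $0$ parts equal to $2j$,'' which is the same as requiring simply $b_1\leq 2j-1$. Hence, for each $j\geq J+1$, the polynomial ${}^J_i h^{(j)}_1(q)$ is the generating function for partitions of type $(k-1,2J,k-i)$ (that is, satisfying conditions (1)--(5) of Proposition \ref{hinterp}) whose largest part is at most $2j-1$. This is exactly the content of the paragraph preceding the statement, so the only remaining work is the limiting argument.

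Next I would pass to the limit. We already know from the discussion preceding Theorem \ref{thm:GisHinfty} that $\lim_{j\to\infty}{}^J_i h^{(j)}_1(q)$ exists in the sense of stabilization of coefficients, and this limit is by definition ${}^J_i h^{(\infty)}_1(q)$. Fix $t\geq 0$. Any partition of $t$ has largest part at most $t$, so as soon as $j$ is large enough that $2j-1\geq t$, the constraint ``largest part $\leq 2j-1$'' imposes no restriction at all on partitions of $t$. Consequently, for all such $j$, the coefficient of $q^t$ in ${}^J_i h^{(j)}_1(q)$ equals the number of partitions of $t$ of type $(k-1,2J,k-i)$, and by the definition of the limit the coefficient of $q^t$ in ${}^J_i h^{(\infty)}_1(q)$ equals this same number.

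Since $t$ was arbitrary, ${}^J_i h^{(\infty)}_1(q)$ is precisely the generating function for partitions of type $(k-1,2J,k-i)$, which is the assertion. There is no genuine obstacle here: all of the combinatorial content has already been extracted in Proposition \ref{hinterp}, and the only point requiring (routine) care is the observation that the largest-part bound $2j-1$ becomes vacuous in each fixed degree as $j\to\infty$, so that the stabilized coefficients record exactly the unrestricted enumeration.
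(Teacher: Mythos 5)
Your argument is correct and matches the paper's own treatment: the paper likewise observes that for $l=1$ condition~6 of Proposition~\ref{hinterp} just says the largest part is at most $2j-1$, and then concludes by noting that each coefficient of ${}^J_i h^{(\infty)}_1(q)$ agrees with that of ${}^J_i h^{(j)}_1(q)$ for $j$ sufficiently large. Your added remark that the bound $2j-1$ becomes vacuous in each fixed degree is exactly the (implicit) justification the paper relies on.
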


\begin{rema}\label{EHfromSumSide}
 Note that it is obvious from conditions 1 and 4 of Proposition \ref{hinterp} 
that the Strong Empirical Hypothesis of Remark \ref{rem:SEH} holds for the 
series ${}^J_i h^{(\infty)}_1(q)$ (recall Remark \ref{rem:EHfromSums}).
\end{rema}

Now applying Theorem \ref{thm:GisHinfty}, we see that $G_{(k-1)J+i}(q)$ for any 
$J\geq 0$ and $i=1,\dots,k$ is the generating function for partitions of type 
$(k-1,2J,k-i)$. In the special case $J=0$, we see that Remark \ref{prodsideinterp}, Theorem 
\ref{thm:GisHinfty} and Proposition \ref{HinfinityCombinatorics} immediately 
imply the \gga identities:

\begin{theo}\label{thm:GGA}
For any $k\geq 1$ and $i=1,\dots,k$, the number of partitions of a nonnegative integer $n$ with parts not 
congruent to $2$ mod $4$ and not congruent to $2k\pm(2i-1)$ mod $4k$ is equal 
to the number of partitions $(b_1,\ldots, b_s)$ of $n$ such that no odd parts 
are 
repeated, $b_p-b_{p+k-1}\geq 2$ if $b_p$ is odd, $b_p-b_{p+k-1}>2$ if $b_p$ is 
even, and there are no more than $k-i$ parts equal to $1$ or $2$. 
\end{theo}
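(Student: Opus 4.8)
The plan is to assemble Theorem~\ref{thm:GGA} directly from the machinery already built, treating it as essentially a corollary. First I would dispose of the trivial case $k=1$ by noting that the product side reduces to an empty product (since $2k\pm(2i-1)=2\pm1$ forces $m\equiv 0$ or $m\equiv\pm1\pmod 4$, leaving no $m$) and the sum side counts only the empty partition, giving $1=1$. For $k\geq 2$, the strategy is to specialize the general shelf result to $J=0$: by Remark~\ref{prodsideinterp}, the left-hand side $G_i(q)$ is the generating function for partitions into parts not congruent to $2\pmod 4$ and not congruent to $0,2k\pm(2i-1)\pmod{4k}$; by Theorem~\ref{thm:GisHinfty} we have $G_i(q)={}^0_ih^{(\infty)}_1(q)$; and by Proposition~\ref{HinfinityCombinatorics} the series ${}^0_ih^{(\infty)}_1(q)$ is the generating function for partitions of type $(k-1,0,k-i)$, i.e.\ partitions $(b_1,\dots,b_s)$ with no repeated odd parts, $b_p-b_{p+k-1}\geq 2$ when $b_p$ is odd, $b_p-b_{p+k-1}>2$ when $b_p$ is even, smallest part $>0$ (automatic), and at most $k-i$ parts equal to $1$ or $2$. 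Chaining these three equalities of formal power series and reading off the coefficient of $q^n$ yields exactly the claimed equality of partition counts.

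The only genuine subtlety is bookkeeping: one must check that the congruence conditions in the statement match those tracked through Section~\ref{sec:shelf0}. In \eqref{Gidef} the excluded residues mod $4k$ are $0$ and $2k\pm(2i-1)$; the residue $0\pmod{4k}$ together with $m\not\equiv 2\pmod 4$ is subsumed once we also exclude $m\equiv 2\pmod 4$ outright, but for the statement of Theorem~\ref{thm:GGA} we should note that a part $m\equiv 0\pmod{4k}$ is automatically $\equiv 0\pmod 4$, hence already not $\equiv 2\pmod4$, so listing ``not $\equiv 0\pmod{4k}$'' is subsumed only if $4\mid 4k$, which holds; thus the condition ``not congruent to $2k\pm(2i-1)$ mod $4k$'' together with ``not congruent to $2$ mod $4$'' captures precisely the parts allowed by \eqref{Gidef}, since excluding $m\equiv 0\pmod{4k}$ is redundant given $4k\equiv 0\pmod 4$. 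I would state this matching explicitly so the reader sees that the product in the theorem really is $G_i(q)$.

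I expect the main obstacle to be purely expository rather than mathematical: making sure the chain of identifications $G_i = {}^0_ih^{(\infty)}_1 = (\text{type }(k-1,0,k-i)\text{ generating function})$ is cited in the right order and that the reader recognizes conditions $1$--$5$ of Proposition~\ref{hinterp} with $J=0$ as being exactly conditions $1$--$4$ in the introduction's statement of the \gga identities (with $i$ replaced by $k-i$, as flagged in the Introduction). Condition $4$ of Proposition~\ref{hinterp}, ``$b_s>2J$,'' becomes ``$b_s>0$'' when $J=0$, which is vacuous for a genuine partition, and condition $5$, ``at most $k-i$ parts equal to $2J+1$ or $2J+2$,'' becomes ``at most $k-i$ parts equal to $1$ or $2$,'' matching condition $4$ of the introduction. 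Once this dictionary is laid out, the proof is a one-paragraph deduction; I would keep it short and defer all real content to the already-proved Theorem~\ref{thm:GisHinfty} and Proposition~\ref{HinfinityCombinatorics}, remarking only that the case $J=0$ of the general shelf interpretation recovers precisely the classical identities and that larger $J$ gives the analogous ``higher-shelf'' family.
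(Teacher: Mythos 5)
Your overall route is exactly the paper's: the proof of Theorem \ref{thm:GGA} given there is literally the one-sentence chain Remark \ref{prodsideinterp} $\Rightarrow$ Theorem \ref{thm:GisHinfty} $\Rightarrow$ Proposition \ref{HinfinityCombinatorics}, specialized to $J=0$, and your dictionary between conditions 1--5 of Proposition \ref{hinterp} at $J=0$ and the sum-side conditions of the theorem (in particular the vacuity of $b_s>0$ and the match of condition 5 with ``at most $k-i$ parts equal to $1$ or $2$'') is the intended one.

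However, your bookkeeping paragraph about the congruence conditions contains a genuine error, and it runs in exactly the wrong direction. A part $m\equiv 0\pmod{4k}$ is $\equiv 0\pmod 4$, hence it \emph{passes} the filter ``$m\not\equiv 2\pmod 4$''; so the exclusion of the residue $0\pmod{4k}$ in \eqref{Gidef} is an \emph{additional} restriction on the allowed parts, not a redundant one. Concretely, the one-part partition $(4k)$ is not $\equiv 2\pmod 4$ and not $\equiv 2k\pm(2i-1)\pmod{4k}$, yet it is excluded by \eqref{Gidef}; so the product you describe in your reconciliation is genuinely larger than $G_i(q)$, and asserting their equality is false. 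The correct reconciliation is that the statement being proved must carry the condition ``not congruent to $0\pmod{4k}$'' alongside the other two, exactly as in the Introduction and Remark \ref{prodsideinterp}: the series identified with ${}^0_i h^{(\infty)}_1(q)$ is $G_i(q)$ of \eqref{Gidef} with all three exclusions in force. Your argument should simply cite Remark \ref{prodsideinterp} for the product-side interpretation rather than attempt to discard one of its hypotheses. The remainder of your proposal (the $k=1$ triviality, the chain of identifications, and the observation that larger $J$ yields the higher-shelf family) is correct and matches the paper.
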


\begin{rema}\label{rem:altmech}
The main ingredients in the proof of Theorem
\ref{thm:GGA} are the recursive definition of the $G_l(q)$'s 
and the Weak Empirical Hypothesis (cf. Remark \ref{rem:WEH}),
but the determination
of closed-form expressions for the $G_l(q)$'s could in principle be
replaced with any other mechanism that
yields the Weak Empirical Hypothesis.
\end{rema}

\begin{rema}\label{rem:robinson} (Cf.\ Remark 2.1 in \cite{LZ}.)
As discussed in \cite{AB}, \cite{R} and \cite{A4}, we can give
an alternate, shorter proof of Theorem \ref{thm:GGA} using
only the Empirical Hypothesis and without using Proposition \ref{hinterp}.
Let $J_1(q),J_2(q),\dots$ be a sequence of formal power series in $q$ 
with constant term $1$ satisfying the recursions \eqref{recursion}
for $j\ge 1$ (with $J_p(q)$'s in place of the $G_p(q)$'s), and suppose that
the Empirical Hypothesis holds for $J_1(q),J_2(q),\dots$. 
In fact, $J_1(q),J_2(q),\dots,J_k(q)$, and therefore all $J_1(q),J_2(q),\dots$,
are uniquely determined by the recursions \eqref{recursion} and the Empirical 
Hypothesis. 
If one recursively builds such a sequence of formal power series, say 
$S_1(q),S_2(q),\dots$,
using \eqref{recursion} and starting from the combinatorial generating 
functions for the sum sides of the \gga identities, then the Empirical Hypothesis 
is easily seen to hold for $S_1(q),S_2(q),\dots$.
Since by Theorem \ref{EH} the Empirical Hypothesis holds
for the sequence $G_1(q),G_2(q),\dots$, we must have that $S_l(q) = 
G_l(q)$ for all $l\ge 1$, by the uniqueness. For $l=1,\dots,k$ this precisely gives the \gga 
identities.
\end{rema}


\appendix

\section*{Appendices}

\section{An $(x,q)$-dictionary for the \gga identities}
\label{app:xqforGGA}
The aim of this appendix is to compare the motivated proof of the \gga 
identities given in this paper and the proof of these identities given in 
Chapter 7 of \cite{A3}. Our main observation is the following:
After setting up the correct dictionary between suitable specializations of the 
$J_{k,i}(a, x, q)$ functions defined in (7.2.2) of \cite{A3} and the $G_l(q)$'s 
defined in Section \ref{sec:shelf0}, the steps in the proof of Theorem 
\ref{thm:closedform} can be matched with those in the proofs of Lemmas 7.1 and 
7.2 in \cite{A3}.

Let 
\begin{align}
(y)_n&=(1-y)\cdots(1-yq^{n-1})\label{notation:(y)_n}, \\
(y)_\infty&=\prod_{m\ge0}(1-yq^m)\label{notation:(y)_infty}.
\end{align}
We will use this notation in this appendix and in Appendix \ref{app:xqforGRR}.

Recall the definition (7.2.2) from \cite{A3}:
\begin{align}
J_{k,i}&(a,x,q)=\nonumber\\
&  \sum_{n\ge0}
\dfrac{x^{kn}q^{kn^2+kn+n-in}a^n(1-x^iq^{(2n+1)i})(axq^{n+2})_\infty(a^{-1}
)_\infty}
{(q)_n(xq^{n+1})_\infty}
\nonumber\\
& \hphantom{=} -
xqa\sum_{n\ge0}
\dfrac{x^{kn}q^{kn^2+kn+n-(i-1)n}a^n(1-x^{i-1}q^{(2n+1)(i-1)})(axq^{n+2}
)_\infty(a^{-1})_\infty}
{(q)_n(xq^{n+1})_\infty} 
\label{eq:7.2.2ofA2},
\end{align}
for any $k\ge 2$ and $i=0,\dots,k+1$.
Note that for the ``edge cases'' $i=0$ and $i=k+1$,
$J_{k,i}(a, x, q)$ is still a formal power series in 
the variables $x$ and $q$, i.e., no negative powers of 
$x$ or $q$ appear.

Now, specializing $q\mapsto q^2$, $a\mapsto -q^{-1}$ and 
$x\mapsto q^{2j}$, we get:
\begin{align}
J&_{k,i}(-q^{-1}, q^{2j}, q^2)=\nonumber\\
&  \sum_{n\ge0}
\dfrac{q^{2(jkn + 
kn^2+kn+n-in)}(-1)^nq^{-n}(1-q^{2(j+2n+1)i})(-q^{2(j+n+2)-1})_\infty(-q)_\infty}
{(q^2)_n(q^{2(j+n+1)})_\infty}
\nonumber\\
& \hphantom{=} +
q^{2(j+1)-1}\sum_{n\ge0}
\dfrac{q^{2(jkn + 
kn^2+kn+n-(i-1)n)}(-1)^nq^{-n}(1-q^{2(j+2n+1)(i-1)})(-q^{2(j+n+2)-1}
)_\infty(-q)_\infty}
{(q^2)_n(q^{2(j+n+1)})_\infty}
\label{eq:7.2.2ofA2spec}.
\end{align}
Combining the sums and multiplying and dividing appropriately so that the
denominator is
$$(1+q^{2n+1})(1+q^{2(n+1)+1})\cdots(1+q^{2(n+j)+1})
F(q)$$
(recall the definition \eqref{eq:Fq} of $F(q)$), 
we arrive at:
\begin{align*}
J_{k,i}(-q^{-1},q^{2j},q^2)=
\dfrac{1}{F(q)}
\sum_{n\geq 0}& \dfrac{(-1)^n q^{4k\binom{n}{2}+(2k(j+2)-2i+1)n}(1-q^{2(n+1)})
\cdots(1-q^{2(n+j)})}
{(1+q^{2n+1})(1+q^{2(n+1)+1})\cdots(1+q^{2(n+j)+1})}\nonumber\\
&\cdot \left(1-q^{2i(2n+1+j)}+q^{2(n+j)+1}(1-q^{2(i-1)(2n+1+j)})\right).
\end{align*}
Comparing with \eqref{altsum},
we obtain the following dictionary:
\begin{equation}\label{eq:GGAdict}
G_{(k-1)j+i}(q)=J_{k,k-i+1}(-q^{-1},q^{2j},q^2).
\end{equation}

The recursive definition of $G_l(q)$ in \eqref{recursion}
is motivated by Lemma 7.2 of \cite{A3}, which states that
\begin{equation*}
J_{k,i}(a,x,q)-J_{k,i-1}(a,x,q)=(xq)^{i-1}(J_{k,k-i+1}(a,xq,q)-aJ_{k,k-i+2}(a,xq
,q)).
\end{equation*}
Making the replacements $q\mapsto q^2$, $a\mapsto -q^{-1}$ 
and $x\mapsto q^{2j}$ for $j\geq 0$, we obtain
\begin{align*}
& J_{k,i}(-q^{-1},q^{2j},q^2)-J_{k,i-1}
(-q^{-1}, q^{2j}, q^2)=\nonumber\\
& q^{2(j+1)(i-1)}(J_{k,k-i+1}(-q^{-1}, q^{2(j+1)},q^2)
+q^{-1} J_{k,k-i+2}(-q^{-1},q^{2(j+1)},q^2)).
\end{align*}
Rearranging the terms, we see that
\begin{align}\label{eq:lemma7.2specialized}
J_{k,k-i+1}(-q^{-1},q^{2(j+1)},q^2) = 
&\,\dfrac{J_{k,i}(-q^{-1},q^{2j},q^2)-
J_{k,i-1}(-q^{-1},q^{2j},q^2)}{q^{2(j+1)(i-1)}}\nonumber\\
&-q^{-1} J_{k,k-i+2}(-q^{-1},q^{2(j+1)},q^2).
\end{align}
Under \eqref{eq:GGAdict}, edge-matching for the $G_l(q)$'s amounts to 
showing that for $j\geq 0$,
\begin{equation*}
J_{k,1}(-q^{-1},q^{2j},q^2)=J_{k,k}(-q^{-1},q^{2(j+1)},q^2).
\end{equation*}

\begin{rema}\label{rem:motivated_to_xq}
It is not hard to reverse-engineer the procedure of this section. 
That is, one can replace judiciously chosen instances of pure powers 
of $q$ with appropriate powers of $x$ in the proof of Theorem \ref{thm:closedform},
and in this way our motivated proof would yield an ``$(x,q)$-proof''
similar in spirit to the ones in Chapter 7 of \cite{A3}.
A similar observation was already made in Section 5 of \cite{AB}.
\end{rema}


\section{Some remarks about \cite{LZ}}
\label{app:LZremarks}

The developments in the present paper suggest a number of enhancements
to and comments on various aspects of \cite{LZ},
in which a motivated proof of Gordon's generalization of
the Rogers-Ramanujan identities was given.  We proceed to
describe these, referring in each case to particular sections,
formulas, and/or results in \cite{LZ}.

\subsection{The start of the induction in the proof of Theorem 2.1
of \cite{LZ}}

The first half of Theorem 2.1 of \cite{LZ} entails proving 
equation (2.7) of \cite{LZ} (recalled below in \eqref{eq:LZ2.7}), which gives
closed forms for recursively defined series $G_{(k-1)j+i}(q)$, where
$j\ge 0$ and $i=1,\dots,k$. (Note that the series $G_{(k-1)j+i}(q)$ here are not 
the same as the $G_{(k-1)j+i}(q)$'s defined in Section \ref{sec:shelf0}.)
In this subsection, we comment on
the step corresponding to $j=0$ of the inductive proof of Theorem 2.1. 
For the inductive step corresponding to $j=0$, 
the proof rests on equation (2.5) of \cite{LZ},
and for all higher $j$'s, the proof gives a variant of the argument in (2.5).
However, the steps in the proof in fact hold for $j=0$ also, thereby giving
a slightly different proof of (2.5).
In other words, an alternate way of presenting Theorem 2.1 would have been
to omit the calculations in and around (2.5) and take $j\ge 0$ in the 
inductive
step of the proof of Theorem 2.1. We now elaborate on this.

We first recall a few formulas from Section 2 of \cite{LZ}.
For $i=1,\dots,k$, formula (2.2) in \cite{LZ} is:
$$G_i(q)
=
\frac{ 1+ \sum_{\lambda \geq 1}
(-1)^\lambda q^{ (2k+1) {\binom{\lambda}{2}} + (k-i+1) \lambda} (1 + q^{(2i-1) 
\lambda })}{ \prod_{n \geq 1} (1-q^n)},$$
and formula (2.3) is:
$$G_i(q) =
\frac{ \sum_{\lambda \geq 0}
(-1)^\lambda q^{ (2k+1) {\binom{\lambda}{2}} + (k+i) \lambda} (1 - q^{ (k-i+1) 
(2 
\lambda+1)} )}
{ \prod_{n \geq 1} (1-q^n)}.$$

Equation (2.2) of \cite{LZ} was obtained by specializing formal variables in 
the 
Jacobi triple product identity and was used to obtain the
closed-form expression for the series $G_{k-1+i}(q)$ for $i=2,\dots,k$
as in (2.5) of \cite{LZ}:
$$ G_{k -1 +i}(q)
=  \frac{
\sum_{\lambda \geq 0}
(-1)^{ \lambda }  q^{ (2k+1) {\binom{\lambda}{2}} + (2k + i ) \lambda}
(1 - q^{\lambda+1}) ( 1 - q^{ (k - i + 1 ) (2\lambda + 2) } )
}{ \prod_{n \geq 1} (1-q^n) }
,$$
where the series $G_{k-1+i}(q)$ are defined recursively by:
$$
G_{k -1 +i}(q) = \frac{G_{k -i +1 } - G_{k- i + 2}}{q^{i-1} }
$$
for $i = 2, \dots, k$.

We remark that although it was natural to use 
(2.2) to prove formula (2.5), it could have been omitted and 
all the computations could have been done starting with (2.3) instead of (2.2). 
In fact, formula (2.5) is a special case corresponding to  $j=1$ of formula 
(2.7) in Theorem 2.1 in \cite{LZ}:
\begin{equation*}
G_{(k-1) j + i}(q) = \frac{
\sum_{\lambda \geq 0} (-1)^\lambda
q^{ (2k +1) {\binom{\lambda}{2}} + [ k (j+1) + i ] \lambda }
(1- q^{\lambda +1} ) \cdots (1 - q^{\lambda + j} )
(1 - q^{ ( k - i + 1) (2 \lambda + j + 1) } )
}{\prod_{n \geq 1} (1-q^n) }. \label{eq:LZ2.7}
\end{equation*}
The proof of (2.7) used the analogue of (2.3) 
instead of the analogue of (2.2). 
As we mentioned, the proof of Theorem 2.1 refers to 
(2.5) for the truth of the 
case corresponding to $j=0$, but the steps in the proof  
hold for $j=0$ as well. 
As promised, the case $j=0$ gives an alternate proof of (2.5),
this time using (2.3) instead of (2.2).

\subsection{Concerning the Empirical Hypothesis}
Let us recall the formulation and proof (using Theorem 2.1 of \cite{LZ})
of the Empirical Hypothesis from \cite{LZ}. 
For $j\geq 0$ and $i=1,\dots,k$,
\begin{eqnarray} \label{G_iclosedform}
G_{(k-1) j + i}(q) \! \!
& = & \!\! 
\frac{ 1 - q^{ ( k-i + 1) ( j+1) } }
{ (1 - q^{ j+1} ) ( 1- q^{ j+2} ) \cdots}
 \\
&  & \!\! 
 +\, \frac{
\sum_{\lambda \geq 1} (-1)^\lambda
q^{ (2k +1) {\binom{\lambda}{2}} + [ k (j+1) + i ] \lambda }
(1- q^{\lambda +1} ) \cdots (1 - q^{\lambda + j} )
(1 - q^{ ( k - i + 1) (2 \lambda + j + 1) } )
}{ \prod_{n \geq 1} (1-q^n) }.
 \nonumber \\
& = & \!\! 
1 + q^{j+1} \gamma_i^{(j+1)} (q)  \quad \text{ if }\: 1  \leq i \leq k-1
\nonumber \\
& \text{or } & \!\! 
1 + q^{j+2} \gamma_k^{(j+2)} (q) \quad \text{ if }\: i = k,
\nonumber
\end{eqnarray}
where
\[
\gamma^{(j)}_i (q) \in \mathbb{C}[[q]].
\]

In fact, \eqref{G_iclosedform} implies more:
For $i=1,\dots,k-1$,
$$\gamma_i^{(j+1)}(q)\in 1+q\mathbb{C}[[q]],$$
so that for $i=1,\dots,k-1$,
$$G_{(k-1)j+i}(q)=1+q^{j+1}+\cdots$$
and for $i=k$,
$$\gamma_i^{(j+2)}(q)\in 1+q\mathbb{C}[[q]]$$
so that
$$G_{(k-1)j+k}(q)=1+q^{j+2}+\cdots.$$

Of course, analogues of Remarks \ref{rem:WEH}, \ref{rem:EHfromSums}, and 
\ref{rem:altmech}
in the present work are true in the setting of the  Gordon identities:

\begin{rema}\label{rem:weakEHisenough}
(Cf.\ Remark \ref{rem:WEH}.)
Of the successively sharper forms of the
Empirical Hypothesis we have arrived at, the only form 
that is really needed for the motivated proof is the 
weakest one, which states that for all positive integers $l$
there exists a positive integer $f(l)$ such that
$$G_l(q) \in 1+q^{f(l)}\mathbb{C}[[q]]$$
with
$$ \lim_{l\rightarrow \infty}f(l) = \infty.$$
We refer to this weakest form as the Weak Empirical Hypothesis.
\end{rema}

\begin{rema}\label{rem:altroute}
(Cf.\ Remark \ref{rem:altmech}.)
The main facts used in the proof of the Gordon
identities are the recursive definition of the $G_l(q)$ and the Empirical 
Hypothesis. Therefore, in principle, the
calculation of the closed-form expressions for the series $G_l(q)$ could have 
been replaced
with any other mechanism that entails the Empirical Hypothesis.
\end{rema}

\begin{rema}\label{rem:EHobvfromsum}
(Cf.\ Remark \ref{rem:EHfromSums}.)
The (strongest form of the) Empirical Hypothesis is obvious from the
sum sides of the Gordon identities. The point
is to obtain it from only the corresponding product sides.
\end{rema}

\subsection{$J$-generalization of statements in Proposition 2.1 of \cite{LZ}}

In the rest of this appendix, we give a series of 
$J$-generalizations (as in the body of the present paper;
recall \eqref{eq:J})
of the propositions and theorems in [LZ].
It is worth recalling that in \cite{AB} --- 
the special case of the Rogers-Ramanujan identities, i.e.,
$k=2$ --- the $J=3$ case gave an answer to Ehrenpreis's question
(without resorting to the Rogers-Ramanujan identities themselves, 
demonstrating that the subtraction of the products, $G_1(q)-G_2(q)$,
has nonnegative coefficients), 
and the $J=1$ and $J=2$ cases led to a proof of the (two)
Rogers-Ramanujan identities themselves.

Proposition 2.1 in [LZ] gives a recursion relation among the 
coefficients ${}_ih_1^{(j)}(q)$ 
that arise when we express each $G_i(q),$ $1\leq i \leq k$, in terms of 
$G_{(k-1)j+l}(q), \dots, G_{(k-1)j+k}(q)$ for arbitrary $j\geq 0$, as
\begin{eqnarray}\label{Gi_appendix2}
G_i(q) & = &
{}_ih^{(j)}_1(q) G_{(k-1) j +1}(q) + \dots + {}_ih^{(j)}_k(q) G_{(k-1) j + 
k}(q).
\end{eqnarray}
For each $J\geq 0$, 
$G_{(k-1)J+i}(q)$ can be expressed analogously in such a way that the statements
in \cite{LZ} correspond to the case $J=0$
and such that an analogue of Proposition 2.1 also holds.

Fix an integer $J\geq 0$.
For each $i=1,\dots,k$ and $j\geq J$, 
the use of (2.17) and (2.18) in \cite{LZ} gives
\begin{equation*}
G_{(k-1)J+i}(q)={}_i^Jh^{(j)}_1(q) G_{(k-1) j +1}(q) + 
\cdots + {}_i^Jh^{(j)}_k(q) G_{(k-1) j + k}(q),
\label{eq:GJintermsofhigher} 
\end{equation*}
just as in (2.12).
For each $j\geq J$, the coefficients ${}^J_ih^{(j)}_l(q)$ form a $k \times
k$ matrix ${\bf h}^{(j)}$ of polynomials in $q$ with nonnegative
integral coefficients.  More explicitly, define row vectors
\[
{}^J_i {\bf h}^{(j)} = [{}^J_ih^{(j)}_1(q), \dots, {}^J_ih^{(j)}_k(q)].
\]
For $j=J$ we have
\[
{}^J_i{\bf h}^{(J)} = [0,  \dots, 1, \dots, 0],
\]
with $1$ in the $i^{\text{th}}$ position, so that $^J{\bf h}^{(J)}$ is the
identity matrix.  The ${}^J_i{\bf h}^{(j)}$ satisfy the same set of
recursions with respect to $j$, independently of $i$.  Explicitly:

\begin{proposition}\label{hrecursions}
Let $j \geq J+1$.  With the left subscript ${}_i$ suppressed, we have
\begin{eqnarray*}
{}^Jh^{(j)}_1(q) & = &
{}^Jh^{(j-1)}_1(q) + \cdots + {}^Jh^{(j-1)}_{k-1}(q) +  {}^Jh^{(j-1)}_k(q)
\nonumber \\
{}^Jh^{(j)}_2(q) & = &
({}^Jh^{(j-1)}_1(q) + \cdots + {}^Jh^{(j-1)}_{k-1}(q)) q^j
\nonumber \\
& \cdots &
\nonumber \\
{}^Jh^{(j)}_{k-1}(q) & = &
({}^Jh^{(j-1)}_1(q) + {}^Jh^{(j-1)}_2(q)) q^{(k-2)j}
\nonumber \\
{}^Jh^{(j)}_k(q)
& = &
{}^Jh^{(j-1)}_1(q) q^{(k-1)j}
\nonumber
\end{eqnarray*}
or in general,
\begin{eqnarray}\label{hlj}
{}^Jh^{(j)}_l(q) & = &
({}^Jh^{(j-1)}_1(q) + \cdots +
{}^Jh^{(j-1)}_{k - l +1}(q)) q^{ (l-1)j }, \quad 1 \leq l \leq k.
\end{eqnarray}
In matrix form, this is:
\begin{eqnarray}\label{h=hA}
{{}^J\bf h}^{(j)} = {}^J{\bf h}^{(j-1)}{\bf A}_{(j)},
\end{eqnarray}
with ${}^J\bf h$ the $k \times k$ matrix defined above and with
\begin{eqnarray}\label{Aj_appendix2}
{\bf A}_{(j)} =
\left[ \begin{array}{ccccc}
1 & q^j & q^{2j} & \cdots  & q^{( k-1) j} \\
\vdots & \vdots &  \vdots & \swarrow  & \vdots \\
1 & q^j & q^{2j}  & \cdots & 0 \\
1 & q^j & 0 &  \cdots & 0 \\
1 & 0 & 0 & \cdots  & 0
\end{array} \right].
\end{eqnarray}
In particular,
\begin{eqnarray}\label{hJ=Identity}
{}^J{\bf h}^{(J)}=I
\end{eqnarray}
(the identity matrix) and 
\begin{eqnarray}\label{h=AAA}
{}^J{\bf h}^{(j)} = {\bf A}_{(J+1)}{\bf A}_{(J+2)} \cdots {\bf A}_{(j)}
\end{eqnarray}
for all $j > J$.
\end{proposition}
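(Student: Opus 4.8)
The plan is to run, with the starting shelf shifted from $0$ to the fixed integer $J\ge 0$ of \eqref{eq:J}, exactly the telescoping argument that appears for $J=0$ in the proof of Proposition~2.1 of \cite{LZ}, and that is structurally the same as the proof of Proposition~\ref{hdef} in the body of the present paper. First I would recall that, as in \cite{LZ} (cf.\ (2.12), (2.17) and (2.18) there), the recursions defining the series $G_l(q)$ of \cite{LZ} together with the edge-matching tautology combine into the single matrix identity $\mathbf{G}_{(j-1)}=\mathbf{A}_{(j)}\mathbf{G}_{(j)}$ for every $j\ge 1$, where $\mathbf{A}_{(j)}$ is the matrix displayed in \eqref{Aj_appendix2}; this is the Gordon-setting counterpart of the identity $\mathbf{G}_{(j-1)}=\mathbf{A}'_{(j)}\mathbf{G}_{(j)}$ used in Section~\ref{sec:matrixinterp}, and it requires nothing beyond the (routine) verification, already in \cite{LZ}, of the shape of $\mathbf{A}_{(j)}$ as the inverse of the appropriate shift matrix.

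Next, fixing $J\ge 0$ and iterating this identity down from shelf $j$ to shelf $J$, one gets, for $j>J$,
\[
\mathbf{G}_{(J)}=\mathbf{A}_{(J+1)}\mathbf{G}_{(J+1)}=\cdots=\mathbf{A}_{(J+1)}\mathbf{A}_{(J+2)}\cdots\mathbf{A}_{(j)}\,\mathbf{G}_{(j)}.
\]
Reading this in components furnishes the coefficients ${}^J_ih^{(j)}_l(q)$ of the expansion \eqref{eq:GJintermsofhigher} as the entries of the matrix product ${}^J\mathbf{h}^{(j)}:=\mathbf{A}_{(J+1)}\cdots\mathbf{A}_{(j)}$ (with ${}^J\mathbf{h}^{(J)}:=I$, the empty product), which is exactly \eqref{h=AAA}, and gives \eqref{hJ=Identity} at once. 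Equation \eqref{h=hA}, namely ${}^J\mathbf{h}^{(j)}={}^J\mathbf{h}^{(j-1)}\mathbf{A}_{(j)}$ for $j\ge J+1$, is then immediate from associativity of matrix multiplication, the base case $j=J+1$ using ${}^J\mathbf{h}^{(J)}=I$. Finally, for the componentwise recursion \eqref{hlj} I would simply read off from \eqref{Aj_appendix2} that the $(m,l)$ entry of $\mathbf{A}_{(j)}$ equals $q^{(l-1)j}$ when $l\le k-m+1$ and $0$ otherwise, so that the $l^{\text{th}}$ entry of ${}^J\mathbf{h}^{(j-1)}\mathbf{A}_{(j)}$ is $\bigl({}^Jh^{(j-1)}_1(q)+\cdots+{}^Jh^{(j-1)}_{k-l+1}(q)\bigr)q^{(l-1)j}$, as claimed.

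I do not expect any genuine obstacle here: the substantive content — that the matrix recursion can be unrolled starting from an arbitrary shelf $J$, not just the $0^{\text{th}}$ one — is already present (implicitly in \cite{AB}, and for $J=0$ in \cite{LZ}), so the work is purely the bookkeeping of carrying the left superscript $J$ through the relevant statements of \cite{LZ}. The only point requiring mild care is the base case $j=J+1$ of \eqref{h=hA} and \eqref{hlj}, where the ``previous'' matrix ${}^J\mathbf{h}^{(J)}$ is the identity rather than a genuinely earlier term of the sequence; everything else is routine matrix algebra.
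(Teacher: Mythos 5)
Your proposal is correct and follows essentially the same route as the paper, which proves this proposition by declaring it ``completely analogous'' to Proposition 2.1 of \cite{LZ} and then, in the final subsection of the appendix, gives precisely your argument: invert the shift relation to get $\mathbf{G}_{(j-1)}=\mathbf{A}_{(j)}\mathbf{G}_{(j)}$, unroll it from shelf $j$ down to shelf $J$ to obtain ${}^J\mathbf{h}^{(j)}=\mathbf{A}_{(J+1)}\cdots\mathbf{A}_{(j)}$, and read off \eqref{hlj} from the entries of $\mathbf{A}_{(j)}$. Your identification of the $(m,l)$ entry of $\mathbf{A}_{(j)}$ as $q^{(l-1)j}$ for $l\leq k-m+1$ and $0$ otherwise is exactly the computation needed, so nothing is missing.
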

The proof is completely analogous to the proof of Proposition 2.1 in \cite{LZ}.

\begin{rema}\label{hnonneg_appendix2}
The coefficients of the polynomials ${}^J_i{ h}^{(j)}_l(q)$ for all $j\geq J$,
$i,l=1,\dots,k$ are
nonnegative integers, as can be seen from equation (2.17) in \cite{LZ}
or, equivalently, from equations \eqref{hJ=Identity} and \eqref{h=AAA}
in the matrix formulation above.
\end{rema}

\begin{rema}\label{ehrenpreis}
From \eqref{eq:GJintermsofhigher}, \eqref{hrecursions}
and the Empirical Hypothesis 
it is clear that
\begin{align*}
G_{(k-1)J+i}(q)&=
\left(\lim\limits_{j\rightarrow\infty}{}_i^Jh^{(j)}_1(q)\right)
\left(\lim\limits_{j\rightarrow\infty} G_{(k-1)j+1}(q)\right) + 
\cdots + 
\left(\lim\limits_{j\rightarrow\infty}{}_i^Jh^{(j)}_k(q)\right)
\left(\lim\limits_{j\rightarrow\infty} G_{(k-1)j+k}(q)\right)\\
&=\lim\limits_{j\rightarrow\infty}{}_i^Jh^{(j)}_1(q)
\end{align*}
for all $i=1,\dots,k$ 
(cf. Theorem \ref{thm:GisHinfty} and Remark \ref{rem:essenceofG=h^infty}).
This, combined with Remark \ref{hnonneg_appendix2} immediately implies
that the coefficients of all the $G_l(q)$'s are nonnegative integers.
As mentioned earlier, for $k=2$ and $J=3$ this is precisely the answer
to Ehrenpreis's question about the product sides of the (two)
Rogers-Ramanujan identities. 
\end{rema}

\subsection{$J$-generalization of statements in Proposition 2.2 of \cite{LZ}}

A similar generalization immediately works for Proposition 2.2 in \cite{LZ}.
Proposition 2.2 is the special case $J=0$ of the following proposition:

\begin{proposition}\label{h=partitions}
For each $j \geq J+1$, $i=1,\dots,k$ and $l=1,\dots,k$, 
the polynomial ${}^J_ih^{(j)}_l(q)$ is the generating function for partitions 
with
difference at least 2 at distance $k-1$, 
such that the smallest part is greater than $J$, 
the part $J+1$ appears at most $k-i$ times, 
the largest part is at most $j$,
and the part $j$ appears exactly $l-1$ times.
\end{proposition}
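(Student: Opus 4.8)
The plan is to proceed exactly as in Proposition~\ref{hinterp} above, by the standard strategy of defining a combinatorial generating function and verifying it satisfies the same initial conditions and recursion as ${}^J_i h^{(j)}_l(q)$; the needed recursion and initial conditions are supplied by Proposition~\ref{hrecursions} (equations \eqref{hJ=Identity}, \eqref{h=AAA}, and the row-wise recursion \eqref{hlj}). Let ${}^J_i\widetilde{h}^{(j)}_l(q)$ denote the generating function for partitions with difference at least $2$ at distance $k-1$, smallest part greater than $J$, the part $J+1$ appearing at most $k-i$ times, largest part at most $j$, and the part $j$ appearing exactly $l-1$ times. It suffices to check that ${}^J_i\widetilde{h}^{(J+1)}_l(q)$ matches ${}^J_i h^{(J+1)}_l(q)$ and that for $j\ge J+2$ the series ${}^J_i\widetilde{h}^{(j)}_l(q)$ satisfies the recursion \eqref{hlj}.

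For the base case $j=J+1$: a partition of the required type with largest part at most $J+1$ and exactly $l-1$ parts equal to $J+1$ is determined by whether it also contains the part $J+2$ --- but no, here the admissible small parts are only $J+1$ (and smaller parts are forbidden since the smallest part must exceed $J$), so the only such partition is $((J+1)^{l-1})$, which exists precisely when $l-1\le k-i$, and its generating function contribution is $q^{(l-1)(J+1)}$. This matches the bottom rows of ${\bf A}_{(J+1)}$ in \eqref{Aj_appendix2} (read off row $i$, which has its rightmost nonzero entry in column $k-i+1$), confirming ${}^J_i\widetilde{h}^{(J+1)}_l(q)={}^J_i h^{(J+1)}_l(q)$. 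For the inductive step $j\ge J+2$: given a partition of the required type with largest part at most $j$ and exactly $l-1$ copies of $j$, remove those $l-1$ copies of $j$; the difference-at-distance-$(k-1)$ condition forces the resulting partition to have largest part at most $j-1$ and to contain at most $k-l$ copies of $j-1$ (if $b_p=j$ for the $(l-1)$th copy, then $b_{p+k-1}\le j-2$, so among the parts below $j$ at most $k-l$ can equal $j-1$). Conversely any such partition extends uniquely. Summing over the number $m-1$ of copies of $j-1$ in the smaller partition, for $m-1=0,1,\dots,k-l$, i.e.\ $m=1,\dots,k-l+1$, and using that a partition of the required type with largest part $\le j-1$ and exactly $m-1$ copies of $j-1$ is enumerated by ${}^J_i\widetilde{h}^{(j-1)}_m(q)$, we obtain ${}^J_i\widetilde{h}^{(j)}_l(q)=q^{(l-1)j}\sum_{m=1}^{k-l+1}{}^J_i\widetilde{h}^{(j-1)}_m(q)$, which is exactly \eqref{hlj}.

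I expect the main obstacle to be the careful bookkeeping in the inductive step: one must verify that the condition ``at most $k-l$ copies of $j-1$ in the reduced partition'' is \emph{equivalent} to (not merely implied by) the difference conditions for the full partition $((j)^{l-1},b_{l+1},\dots,b_s)$, and that all of conditions 1--5 of the proposition (no repeated odd parts is not relevant here since the parts in this appendix follow Gordon's, not the \gga, rules --- wait, in fact in the \cite{LZ}/Gordon setting the relevant condition is simply ``difference at least $2$ at distance $k-1$'' with no parity distinction) pass transparently between the reduced and full partitions. In the \gga body (Proposition~\ref{hinterp}) this required splitting into a ``no part $2j-1$'' case and a ``one part $2j-1$'' case because odd parts behave specially; in the present Gordon-type setting that parity subtlety is absent, so the argument is genuinely simpler and the single recursion \eqref{hlj} suffices without the two-term split. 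The proof is therefore completely analogous to that of Proposition~2.2 in \cite{LZ}, and we omit the remaining routine details.
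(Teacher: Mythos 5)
Your proposal is correct and follows essentially the same route as the paper's proof: verify that the combinatorial generating functions satisfy the initial values ${}^J_i{\bf h}^{(J+1)}=[1,q^{J+1},\dots,q^{(k-i)(J+1)},0,\dots,0]$ and the recursion \eqref{hlj}, the latter corresponding to stripping the $l-1$ copies of the part $j$ and classifying the remainder by its number of copies of $j-1$ (at most $k-l$, forced by the difference-two-at-distance-$(k-1)$ condition). Your observation that the parity split needed in Proposition \ref{hinterp} disappears here is also exactly the point of the paper's remark that only a trivial modification of the argument in \cite{LZ} is required.
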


\begin{proof}
Only trivial modification of the proof in [LZ] is needed. 
As in \cite{LZ}, we show that the combinatorial generating functions
described here have the same initial values and recursions as the
polynomials ${}^J_ih^{(j)}_l(q)$.  We say that a partition is of type
$(k-1,J, k-i)$ if it has difference at least 2 at distance $k-1$, 
smallest part larger than $J$, and $J+1$
appearing at most $k-i$ times. 
(Although we use the same notation, these are different from
the partitions discussed in the proof of Proposition \ref{hinterp}.) 
Similarly, \eqref{hlj} corresponds to the
following combinatorial fact: For $j \ge J+2$,
\begin{eqnarray*}
\lefteqn{\text{the number of partitions of $m$ of type $(k-1,J, k-i)$
such that the largest part is at most}}
\nonumber \\
& & \text{$j$ and the part $j$ appears exactly $l-1$ times}
\nonumber \\
& = &
\sum_{p=1}^{k-l+1}
\text{the number of partitions of $m-(l-1)j$ of type $(k-1,J, k-i)$ such
that the}
\nonumber \\
& & \quad
\text{largest part is at most $j-1$ and the part $j-1$ appears
exactly $p-1$ times.}
\nonumber
\end{eqnarray*}

For $J\geq 0$, the initial values
\[
{}^J_i{\bf h}^{(J+1)} = [1, q^{J+1}, q^{2(J+1)}, \cdots, q^{(k-i)(J+1)}, 0, 
\cdots, 0]
\]
also match those of the generating functions.
\end{proof}

\subsection{$J$-generalization of Theorem 2.2 of \cite{LZ}}

Correspondingly, Theorem 2.2 of \cite{LZ} generalizes as follows:
\begin{theo}\label{Gi=difference}
For $i=1,\dots,k$, $G_{(k-1)J+i}(q)$ is the generating function for 
partitions
with difference at least $2$ at distance $k-1$ 
such that smallest part is greater than $J$ and $J+1$ appears
at most $k-i$ times.
\end{theo}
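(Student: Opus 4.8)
The plan is to mimic, in the setting of \cite{LZ}, the chain of reasoning that in the body of this paper runs from Proposition~\ref{hinterp} through Theorem~\ref{thm:GisHinfty} to Theorem~\ref{thm:GGA}; the needed ingredients are already in place, namely Proposition~\ref{h=partitions} and Remark~\ref{ehrenpreis}. First I would specialize Proposition~\ref{h=partitions} to $l=1$. Since the part $j$ is then required to appear exactly $l-1=0$ times, the conditions ``largest part at most $j$'' and ``the part $j$ appears exactly $l-1$ times'' together become simply ``largest part at most $j-1$.'' Thus for $j\geq J+1$, the polynomial ${}^J_ih^{(j)}_1(q)$ is the generating function for partitions of type $(k-1,J,k-i)$ (difference at least $2$ at distance $k-1$, smallest part greater than $J$, and $J+1$ appearing at most $k-i$ times) whose largest part is at most $j-1$.

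Next I would take the limit $j\to\infty$. Because imposing the extra bound ``largest part at most $j-1$'' excludes no partition all of whose parts are $\le j-1$, the coefficient of any fixed power $q^t$ in ${}^J_ih^{(j)}_1(q)$ stabilizes once $j-1\ge t$; hence $\lim_{j\to\infty}{}^J_ih^{(j)}_1(q)$ exists and equals the generating function for \emph{all} partitions of type $(k-1,J,k-i)$, with no restriction on the largest part. Finally, Remark~\ref{ehrenpreis} --- which uses the recursive definition of the $G_l(q)$, the recursions of Proposition~\ref{hrecursions}, the vanishing of $\lim_{j\to\infty}{}^J_ih^{(j)}_l(q)$ for $l\ne 1$, and the Empirical Hypothesis (even in its weakest form) --- gives $G_{(k-1)J+i}(q)=\lim_{j\to\infty}{}^J_ih^{(j)}_1(q)$. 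Combining this identity with the previous paragraph yields the theorem; in the special case $J=0$ one recovers Theorem 2.2 of \cite{LZ}.

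The argument is essentially bookkeeping, the verbatim analogue of the passage in Section~\ref{sec:combinterp}, so I do not anticipate a genuine obstacle. The one point that deserves explicit mention is the justification that $\lim_{j\to\infty}{}^J_ih^{(j)}_1(q)$ exists in the sense of coefficientwise stabilization; this is precisely where the Empirical Hypothesis is invoked, through Remark~\ref{ehrenpreis}, and it is the same subtlety that arises in the body of the paper.
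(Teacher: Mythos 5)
Your proposal is correct and follows exactly the route the paper takes: its proof of this theorem is the one-line citation of \eqref{eq:GJintermsofhigher}, Proposition \ref{h=partitions} and the Empirical Hypothesis, and your write-up simply fills in the same chain (specializing to $l=1$, passing to the limit $j\to\infty$, and invoking the limit identity of Remark \ref{ehrenpreis}). No gaps; the extra care you take over coefficientwise stabilization is precisely the point where the Empirical Hypothesis enters, as in the body of the paper.
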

\begin{proof}
This follows immediately from \eqref{eq:GJintermsofhigher}, 
Proposition \ref{h=partitions} and the Empirical Hypothesis.
\end{proof}
\begin{rema}\label{also4.1}
It is interesting to note that Theorem \ref{Gi=difference}
is exactly Theorem 4.1 of \cite{LZ},
which was proved differently in \cite{LZ}.
\end{rema}

Because of this remark, 
it is natural to give a $J$-generalization of Theorem 4.2 of \cite{LZ} here:

\begin{theo}\label{interp}
For $l = 1, \dots, k$, $J\geq 0$ and $j \geq J+1$, the right-hand side of
(\ref{eq:GJintermsofhigher}) expresses the generating function 
$G_{(k-1)J+i}(q)$ as the sum of its
contributions corresponding to the number of times, namely, $0, 1,
\dots, k-1$, that the part $j$ appears in a partition.
\end{theo}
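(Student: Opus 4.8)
The key observation is that the combinatorial descriptions in Proposition \ref{h=partitions} are precisely designed to make this decomposition transparent. By Theorem \ref{Gi=difference}, $G_{(k-1)J+i}(q)$ is the generating function for partitions with difference at least $2$ at distance $k-1$, with smallest part greater than $J$, and with the part $J+1$ appearing at most $k-i$ times --- that is, partitions of type $(k-1,J,k-i)$ in the terminology of the proof of Proposition \ref{h=partitions}. First I would fix $j \geq J+1$ and partition the set of all partitions of type $(k-1,J,k-i)$ according to the number $m$ of times the part $j$ appears, where $m$ ranges over $0,1,\dots,k-1$ (it cannot exceed $k-1$ by the difference-at-distance-$(k-1)$ condition, since $j - j = 0 < 2$ would force a violation if $j$ appeared $k$ times). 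This is a genuine set partition, so the generating function for all partitions of type $(k-1,J,k-i)$ is the sum over $m = 0,\dots,k-1$ of the generating function for those with exactly $m$ copies of $j$.

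Next I would observe that, for a partition of type $(k-1,J,k-i)$, having largest part at most $j$ is automatic once we also stipulate it has exactly $m$ copies of $j$ (those $m$ copies are then the largest parts). Hence the generating function for partitions of type $(k-1,J,k-i)$ with exactly $m$ copies of the part $j$ is exactly the generating function in Proposition \ref{h=partitions} with $l-1 = m$, i.e., ${}^J_i h^{(j)}_{m+1}(q)$, which equals the generating function for partitions of type $(k-1,J,k-i)$ with largest part at most $j$ and part $j$ appearing exactly $l-1 = m$ times. Therefore the total generating function decomposes as $\sum_{l=1}^{k} {}^J_i h^{(j)}_l(q)\,(\text{g.f.\ for the ``tail'' after removing the copies of }j)$.

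Finally I would identify the tail: after deleting the $l-1$ copies of the part $j$ from a partition of type $(k-1,J,k-i)$ counted by ${}^J_i h^{(j)}_l(q)$ as a generating function, what remains, when the full partition's remaining parts are allowed to be unrestricted except that they must again be of type $(k-1,J,k-i)$, is accounted for precisely by $G_{(k-1)j+l}(q)$ via Theorem \ref{Gi=difference} --- because $G_{(k-1)j+l}(q)$ generates partitions with difference at least $2$ at distance $k-1$, smallest part greater than $j$, and the part $j+1$ appearing at most $k-l$ times, which is exactly the condition on the tail forced by the $l-1$ copies of $j$ sitting above it (the distance-$(k-1)$ difference condition across the boundary translates into the ``$j+1$ appears at most $k-l$ times'' restriction). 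Matching this against the right-hand side of \eqref{eq:GJintermsofhigher}, which reads $G_{(k-1)J+i}(q) = \sum_{l=1}^{k} {}^J_i h^{(j)}_l(q)\, G_{(k-1)j+l}(q)$, we see that each summand ${}^J_i h^{(j)}_l(q)\, G_{(k-1)j+l}(q)$ is exactly the contribution of those partitions in which $j$ appears exactly $l-1$ times. The main obstacle I anticipate is bookkeeping the boundary condition precisely --- verifying that ``$l-1$ copies of $j$ on top'' is equivalent, for partitions with difference at least $2$ at distance $k-1$, to ``the tail has $j+1$ appearing at most $k-l$ times and smallest part exceeding $j$'' --- but this is already implicit in the proof of Proposition \ref{h=partitions} and in \eqref{hrecursions}, so it amounts to reading off that argument in the limiting case rather than doing new work.
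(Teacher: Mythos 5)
Your overall strategy is the intended one: the paper states Theorem \ref{interp} without an explicit proof, as the evident consequence of Proposition \ref{h=partitions}, Theorem \ref{Gi=difference} and \eqref{eq:GJintermsofhigher}, and the decomposition you describe --- classifying the type-$(k-1,J,k-i)$ partitions by the number $l-1\in\{0,\dots,k-1\}$ of parts equal to $j$, and splitting each such partition at $j$ --- is exactly what is meant. Your boundary analysis in the last paragraph (that $l-1$ copies of $j$ below force at most $k-l$ copies of $j+1$ above, by the difference-$2$-at-distance-$(k-1)$ condition, and conversely) is correct and is the only point requiring genuine verification.

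However, the middle step as written is false and, taken literally, would prove the wrong identity. You assert that for a partition of type $(k-1,J,k-i)$, ``having largest part at most $j$ is automatic once we stipulate it has exactly $m$ copies of $j$,'' and conclude that the generating function for type-$(k-1,J,k-i)$ partitions with exactly $m$ copies of $j$ equals ${}^J_ih^{(j)}_{m+1}(q)$. Neither claim is true: such a partition can perfectly well contain parts larger than $j$ (for instance $(j+3,\,j)$ has exactly one part equal to $j$), and summing your asserted identity over $m$ would give $G_{(k-1)J+i}(q)=\sum_{l=1}^{k}{}^J_ih^{(j)}_l(q)$, omitting the factors $G_{(k-1)j+l}(q)$ entirely. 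The correct statement is that each type-$(k-1,J,k-i)$ partition with exactly $l-1$ parts equal to $j$ splits uniquely into its parts $\le j$ (a partition counted by ${}^J_ih^{(j)}_l(q)$, by Proposition \ref{h=partitions}) and its parts $>j$ (a partition counted by $G_{(k-1)j+l}(q)$, by Theorem \ref{Gi=difference} with $(J,i)$ replaced by $(j,l)$), so that this class has generating function ${}^J_ih^{(j)}_l(q)\,G_{(k-1)j+l}(q)$; relatedly, the ``tail'' should be the set of parts strictly greater than $j$, not ``what remains after removing the copies of $j$.'' Your final paragraph does land on the right product, so the repair is local, but as written the ``hence'' and ``therefore'' in your second paragraph do not follow from what precedes them.
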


\subsection{$J$-generalization of the matrix interpretation}

One can similarly express all the above $J$-generalizations 
in an illuminating matrix form:

Set
\begin{eqnarray*}
{\bf G}_{(0)} =
\left[ \begin{array}{c}
G_1(q) \\
\vdots \\
G_k(q)
\end{array} \right]
\end{eqnarray*}
and in general,
\begin{eqnarray*}
{\bf G}_{(j)} =
\left[ \begin{array}{c}
G_{(k-1)j+1}(q) \\
\vdots \\
G_{(k-1)j+k}(q)
\end{array} \right]
\end{eqnarray*}
for $j \geq 0$.  Also set
\begin{eqnarray*}
{\bf B}_{(j)} =
\left[ \begin{array}{cccccc}
0           & 0            & \cdots   & 0       & 0        & 1       \\
0           & 0            & \cdots   & 0       & q^{-j}   & -q^{-j} \\
0           & 0            & \cdots   & q^{-2j} & -q^{-2j} & 0       \\
\vdots      & \vdots       & \swarrow & \vdots  & \vdots   & \vdots  \\
0           & q^{-(k-2)j}  & \cdots   & 0       & 0        & 0       \\
q^{-(k-1)j} & -q^{-(k-1)j} & \cdots   & 0       & 0        & 0
\end{array} \right]
\end{eqnarray*}
for $j \geq 1$. Since 
\begin{eqnarray}\label{G=BG}
{\bf G}_{(j)} = {\bf B}_{(j)}{\bf G}_{(j-1)}
\end{eqnarray}
for $j \geq 1$, one has
\[
{\bf G}_{(j)} = {\bf B}_{(j)}{\bf B}_{(j-1)} \cdots {\bf B}_{(J+1)}{\bf
G}_{(J)}
\]
for all $J \geq 0$ and $j\ge J$.  Since
\[
{\bf B}_{(j)} = ({\bf A}_{(j)})^{-1}
\]
(recall (\ref{Aj})) we thus have
\[
{\bf G}_{(J)} = {\bf A}_{(J+1)}{\bf A}_{(J+2)} \cdots {\bf A}_{(j)}{\bf
G}_{(j)}
\]
for $j \geq J$.  Defining ${}^J{\bf h}^{(j)}$ recursively by
\[
{}^J{\bf h}^{(J)} = I,
\]
\[
{}^J{\bf h}^{(j)} = {}^J{\bf h}^{(j-1)}{\bf A}_{(j)}
\]
for $j \geq J+1$, we have that
\[
{}^J{\bf h}^{(j)}{\bf B}_{(j)} = {}^J{\bf h}^{(j-1)},
\]
\[
{}^J{\bf h}^{(j)}={\bf A}_{(J+1)}\cdots{\bf A}_{(j)}
\]
and
\[
{\bf G}_{(J)} = {}^J{\bf h}^{(j)}{\bf G}_{(j)}
\]
for each $j \geq J$.  Thus we have an ``automatic''
reformulation and proof of the $J$-generalized form of 
Proposition \ref{hrecursions}, including a
$J$-generalized form of \eqref{eq:GJintermsofhigher}.

\section{An $(x,q)$-dictionary for the Gordon identities}
\label{app:xqforGRR}

By analogy with what we did in Appendix \ref{app:xqforGGA}, 
we compare Section 2 of \cite{LZ}
with Sections 7.1 and 7.2 of \cite{A3}.
Our main observation is the following (cf. Appendix \ref{app:xqforGGA}): 
After setting up the correct dictionary between
suitable specializations of the formal series $J_{k,i}(a,x,q)$ defined in (7.2.2) of 
\cite{A3} ---
$x$ specialized to successively higher powers of $q$ 
and $a$ specialized to $0$ ---
and the expressions $G_l(q)$ defined in \cite{LZ},
we can match the steps in the proof of Theorem 2.1 of \cite{LZ} with
those in the proof of Lemma 7.1 in \cite{A3}.
We proceed to give this dictionary.

Fix $k\geq 2$ and let $i=0,\dots,k+1$. Let us first recall relevant notation from 
\cite{A3}.
Setting $a\mapsto 0$ in (7.2.1) and (7.2.2) of \cite{A3}, we obtain:
\begin{align}
H_{k,i}(0,x,q)&=\sum\limits_{n\geq 0}(-1)^n 
\frac{x^{kn}q^{kn^2+n-in+{\binom{n}{2}}}(1-x^iq^{2ni})}{(q)_n(xq^n)_{\infty}}\\
J_{k,i}(0,x,q)&=H_{k,i}(0,xq,q)=\sum\limits_{n\geq 0}(-1)^n 
\frac{x^{kn}q^{kn+kn^2+n-in+{\binom{n}{2}}}(1-x^iq^{i+2ni})}
{(q)_n(xq^{n+1})_{\infty}}. \label{def:Jxq}
\end{align}
Note that the expressions are well defined for the ``edge cases''
$i=0$ and $i=k+1$.

For $j\geq 0$, specializing $x\mapsto q^j$ in \eqref{def:Jxq},
we obtain
\begin{align*}
J_{k,i}(0,q^j,q)&=H_{k,i}(0,q^{j+1},q)\\&=
\sum\limits_{n\geq 0}(-1)^n 
\frac{q^{jkn+kn+kn^2+n-in+\binom{n}{2}}(1-q^{ji+i+2ni})}{(q)_n(q^{j+n+1})_{
\infty}}. 
\end{align*}
We rewrite the sum so that the denominator in each summand is $(q)_\infty$:
\begin{align}
J_{k,i}(0,q^j,q)
&=\sum\limits_{n\geq 0}(-1)^n 
\frac{q^{(2k+1)\binom{n}{2} + (k(j+1)+k-i+1)n}(1-q^{n+1})
\cdots(1-q^{n+j})(1-q^{i(2n+j+1)})}{(q)_{\infty}}\label{def:Jq}
\end{align}
and obtain
\begin{align}
J_{k,0}(0,q^j,q)&= 0, \label{Jk0}\\
J_{k,k-i+1}(0,q^j,q)&=\sum\limits_{n\geq 0}(-1)^n 
\frac{q^{(2k+1)\binom{n}{2} + 
(k(j+1)+i)n}(1-q^{n+1})\cdots(1-q^{n+j})
(1-q^{(k-i+1)(2n+j+1)})}{(q)_{\infty}}\label{def:Jqi'}.
\end{align}
It is now clear that for $j\ge 0$ and $i=0,\dots,k+1$,
\begin{align}
G_{(k-1)j+i}(q)&=J_{k,k-i+1}(0,q^j,q)\left(=H_{k,k-i+1}(0,q^{j+1},q)\right)
,\label{dict}
\end{align}
and this is our desired dictionary.

With this setup it is now easy to match the steps in the proof of 
Lemma 7.1 of \cite{A3} and the first half of the
proof of Theorem 2.1 of \cite{LZ}, which is concerned with 
finding closed-form expressions for the series $G_l(q)$. 

We now give details about the ``edge-matching,'' which is the
second assertion in Theorem 2.1 of \cite{LZ}.
Recall Lemma 7.2 of \cite{A3}, with $a\mapsto 0$:
\begin{equation}
J_{k,i}(0,x,q)-J_{k,i-1}(0,x,q)=
(xq)^{i-1}J_{k,k-i+1}(0,xq,q).\label{lem7.2A2} 
\end{equation}
The left-hand side of \eqref{dict} exhibits the (tautological) 
``edge-matching'' 
phenomenon 
$$G_{(k-1)j+k}(q)=G_{(k-1)(j+1)+1}(q).$$
The same phenomenon for the right-hand side is demonstrated
by specializing \eqref{lem7.2A2}
with $x\mapsto q^j,$ $i\mapsto 1$
and noting \eqref{Jk0}:
\begin{equation*}
J_{k,1}(0,q^j,q)-J_{k,0}(0,q^j,q)=(q^jq)^{1-1}J_{k,k-1+1}(0,q^{j+1},q)
\label{eq:Jedge}
\end{equation*}
and hence
\begin{equation*}
J_{k,1}(0,q^j,q)=J_{k,k}(0,q^{j+1},q).
\end{equation*}

\begin{rema}
An analogue of Remark \ref{rem:motivated_to_xq} 
also holds in the context of the Gordon identities,
with the reverse-engineering procedure applied to Theorem 2.1 of \cite{LZ}. 
\end{rema}

\vspace{.3in}

\noindent {\small \sc Department of Mathematics, Rutgers University,
Piscataway, NJ 08854} \\ 
{\em E--mail address}:
\texttt{bud@math.rutgers.edu} \\

\noindent {\small \sc Department of Mathematics, Rutgers University,
Piscataway, NJ 08854} \\
{\em E--mail address}:
\texttt{skanade@math.rutgers.edu} \\

\noindent {\small \sc Department of Mathematics, Rutgers University,
Piscataway, NJ 08854} \\
{\em E--mail address}:
\texttt{lepowsky@math.rutgers.edu} \\

\noindent {\small \sc Department of Mathematics, Rutgers University,
Piscataway, NJ 08854} \\
{\em E--mail address}:
\texttt{rhmcrae@math.rutgers.edu} \\
\noindent Current address:\\
\noindent{\small \sc 
Beijing International Center for Mathematical Research, Peking University, Beijing, China 100084}\\
{\em E--mail address}:
\texttt{robertmacrae@math.pku.edu.cn} \\

\noindent {\small \sc Department of Mathematics, Rutgers University,
Piscataway, NJ 08854} \\
{\em E--mail address}:
\texttt{fq15@math.rutgers.edu} \\

\noindent {\small \sc Department of Mathematics, Rutgers University,
Piscataway, NJ 08854} \\
{\em E--mail address}:
\texttt{russell2@math.rutgers.edu} \\

\noindent {\small \sc Department of Mathematics, Rutgers University,
Piscataway, NJ 08854} \\
{\em E--mail address}:
\texttt{sadowski@math.rutgers.edu} \\
\noindent Current address:\\
\noindent{\small \sc 
Ursinus College, Collegeville, PA 19426}\\
{\em E--mail address}:
\texttt{csadowski@ursinus.edu} \\

\end{document}